\newtheorem{thm}{Theorem}[section]
\theoremstyle{remark}
\newtheorem{remark}[thm]{Remark}
\def\txtd{{\textnormal{d}}}
\def\txte{{\textnormal{e}}}
\def\txtD{{\textnormal{D}}}
\def\Re{{\textnormal{Re}}}
\def\I{\infty}
\def\R{\mathbb{R}}
\def\C{\mathbb{C}}
\def\N{\mathbb{N}}
\def\I{\infty}
\newcommand{\be}{\begin{equation}}
\newcommand{\ee}{\end{equation}}
\newcommand{\bea}{\begin{eqnarray}}
\newcommand{\eea}{\end{eqnarray}}
\newcommand{\beann}{\begin{eqnarray*}}
\newcommand{\eeann}{\end{eqnarray*}}
\newcommand{\benn}{\begin{equation*}}
\newcommand{\eenn}{\end{equation*}}
\def\I{\infty}
\newcommand{\cM}{{\mathcal M}}  
\newcommand{\cO}{{\mathcal O}}  
\newcommand{\cS}{{\mathcal S}}  
\newcommand{\cT}{{\mathcal T}}  
\newcommand{\cY}{{\mathcal Y}}  
\newcommand{\UU}{\mathcal{U}}
\newcommand{\VV}{\mathcal{V}}
\newcommand{\trace}{\mathrm{tr}\,}
\newcommand{\eps}{\varepsilon}
\newcommand{\ueps}{u_\eps}
\newcommand{\Teps}{T_\eps}
\DeclareMathOperator{\sgn}{sgn}
\newcommand{\includeTikzOrEps}[1]{\tikzexternalenable 
  \tikzsetnextfilename{#1_img}
  {\include{#1}} \tikzexternaldisable} 
\pgfplotsset{
  log x ticks with fixed point/.style={
      xticklabel={
        \pgfkeys{/pgf/fpu=true}
        \pgfmathparse{exp(\tick)}%
        \pgfmathprintnumber[fixed relative, precision=3]{\pgfmathresult}
        \pgfkeys{/pgf/fpu=false}
      }
  },
  log y ticks with fixed point/.style={
      yticklabel={
        \pgfkeys{/pgf/fpu=true}
        \pgfmathparse{exp(\tick)}%
        \pgfmathprintnumber[fixed relative, precision=3]{\pgfmathresult}
        \pgfkeys{/pgf/fpu=false}
      }
  }
}
\pgfplotsset{
    discard if not/.style 2 args={
        x filter/.code={
            \edef\tempa{\thisrow{#1}}
            \edef\tempb{#2}
            \ifx\tempa\tempb
            \else
                \def\pgfmathresult{inf}
            \fi
        }
    }
}
\newcommand{\includeTikzOrEps}[1]{\includegraphics{#1_img}}
\begin{document}

\author{Franz Achleitner\thanks{Technische Universit\"at Wien, Institute for Analysis and Scientific Computing, Wiedner Hauptstraße 8-10 1040 Wien, Austria},~~Christian Kuehn\thanks{Technical University of Munich (TUM), Faculty of Mathematics, 85748 Garching bei M\"unchen, Germany},~~Jens M.~Melenk\footnotemark[1]~~and~~Alexander Rieder\thanks{University of Vienna, Faculty of Mathematics, Oskar-Morgenstern-Platz 1, 1090 Wien, Austria}
}
 
\title{Metastable Speeds in the Fractional Allen-Cahn Equation}

\maketitle

\begin{abstract}
We study numerically the one-dimensional Allen-Cahn equation with the spectral fractional Laplacian $(-\Delta)^{\alpha/2}$ on intervals with homogeneous Neumann boundary conditions. 
In particular, we are interested in the speed of sharp interfaces approaching and annihilating each other. 
This process is known to be exponentially slow in the case of the classical Laplacian. 
Here we investigate how the width and speed of the interfaces change if we vary the exponent~$\alpha$ of the fractional Laplacian. 
For the associated model on the real-line we derive asymptotic formulas for the interface speed and time--to--collision in terms of~$\alpha$ and a scaling parameter~$\varepsilon$.
We use a numerical approach via a finite-element method based upon extending the fractional Laplacian to a cylinder in the upper-half plane, 
and compute the interface speed, time--to--collapse and interface width for~$\alpha\in(0.2,2]$.
A comparison shows that the asymptotic formulas for the interface speed and time--to--collision give a good approximation for large intervals.
\end{abstract}

\section{Introduction}
\label{sec:intro}

In this work we study the fractional Allen-Cahn equation
\be
\label{eq:AC}
\partial_t u = -\eps^\alpha(-\Delta)^{\alpha/2}u+u(1-u^2)=:
-\eps^{\alpha}(-\Delta)^{\alpha/2}u-f(u),
\ee
for $u=u(x,t)$, where $(x,t)\in\Omega\times[0,T)$, $\Omega:=[-L,L]$ is the spatial 
domain for $L>0$, $-(-\Delta)^{\alpha/2}$ is the fractional Laplacian for
$\alpha\in(0,2]$, and $0<\eps\ll 1$ is a small parameter. 
We assume homogeneous Neumann boundary conditions and an initial condition
\benn
\partial_xu(-L,t)=0=\partial_x u(L,t),\qquad u(x,0)=u_0(x) \,,
\eenn
for some given initial datum $u_0 =u_0(x)$.
We consider the \textit{spectral} fractional Laplacian, which is defined as
\benn
(-\Delta)^{\alpha/2} u:=\sum_{n=0}^{\infty}{\lambda_{n}^{\alpha/2} (u,\varphi_n)_{L^2(\Omega)} \, \varphi_n}
\eenn
where $(\lambda_n,\varphi_n)$ are the eigenvalues and eigenfunctions of the Laplacian with homogeneous Neumann boundary conditions
scaled such that $\|\varphi_n\|_{L^2(\Omega)} = 1$. 
Other possible definitions for fractional Laplacians on bounded domains with Neumann-type boundary conditions are discussed and compared in~\cite[\S 7]{DiRoVa:2017} and~\cite[\S 6]{Grubb:2016}.

\smallskip 
The potential $F$ associated to $f(u) =u(u^2 -1)$ via $F'(u)=f(u)$ is given by
\benn
F(u):=\frac14 u^4-\frac12 u^2 \,.
\eenn
There are three homogeneous steady states for~\eqref{eq:AC} given by $u_*=-1,0,1$. 
Using the linearized problem 
\benn
\partial_t U=\underbrace{\left[-\eps^\alpha(-\Delta)^{\alpha/2}
-\txtD_u f(u_*)\right]}_{:=L(u_*)}U,\qquad U=U(x,t),
\eenn
one easily checks that $u_*=\pm1$ are locally asymptotically stable since the spectrum of $L(u_*)$ is contained in $\{z\in\C:\Re(z)<0\}$. 
The state $u_*=0$ is unstable. 
Indeed, one can also view $u_*=\pm 1$ as global minima and $u_*=0$ as a local maximum of the potential $F$.

For the case of $\alpha=2$, so that $-(-\Delta)^{\alpha/2}=\Delta$, even more global dynamics of~\eqref{eq:AC} is well understood; see Section~\ref{subsec:classical} for a more detailed technical review. 
Here we just emphasize that the Allen-Cahn equation~\eqref{eq:AC} can exhibit metastability as shown in Figure~\ref{fig:alpha2}. 
\begin{figure}[h]
  \begin{subfigure}[b]{0.32\textwidth}
    \includegraphics[width=\textwidth]{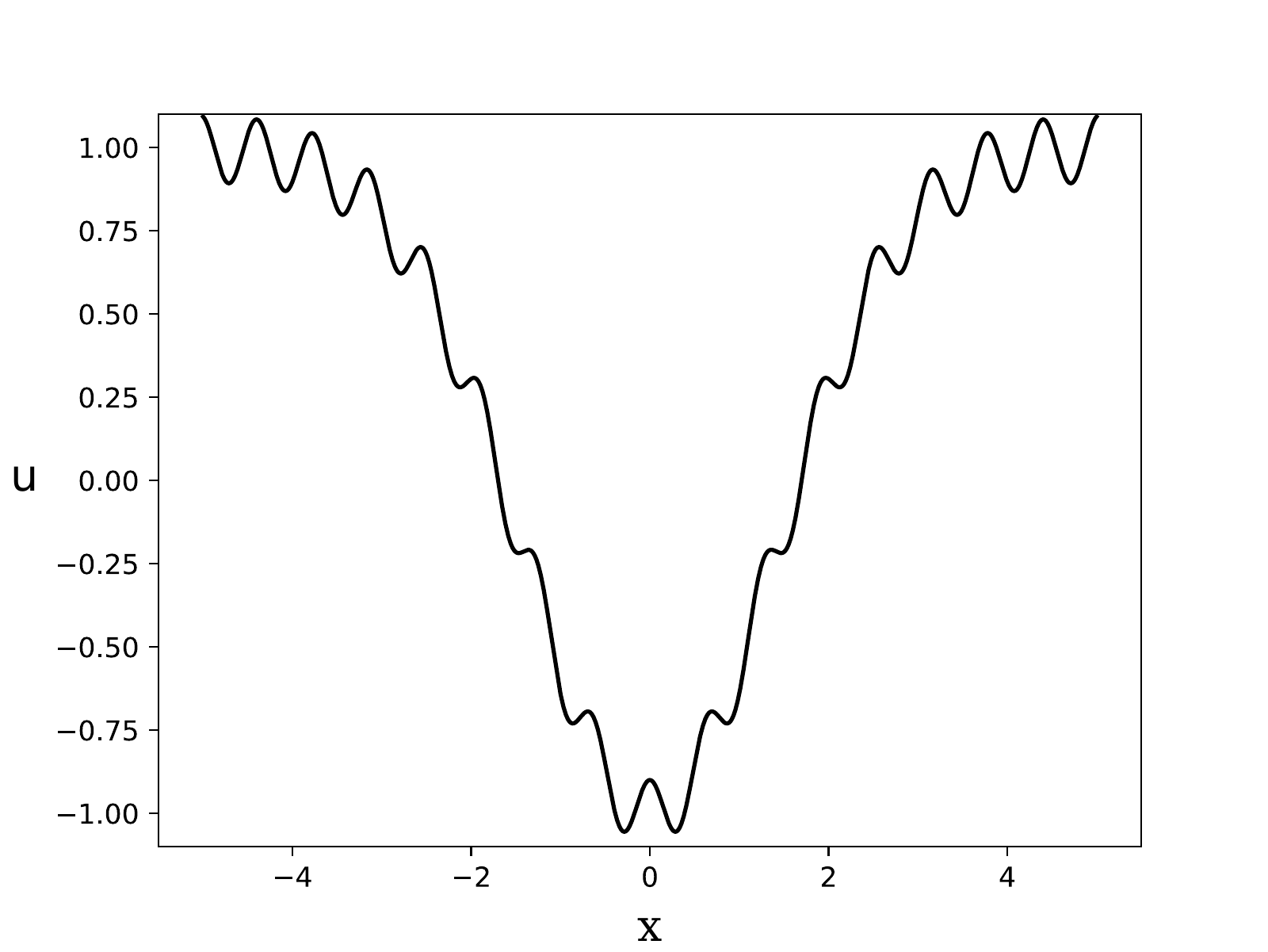}
    \caption{solution at time $t=0$.}
  \end{subfigure}
  \begin{subfigure}[b]{0.32\textwidth}
    \includegraphics[width=\textwidth]{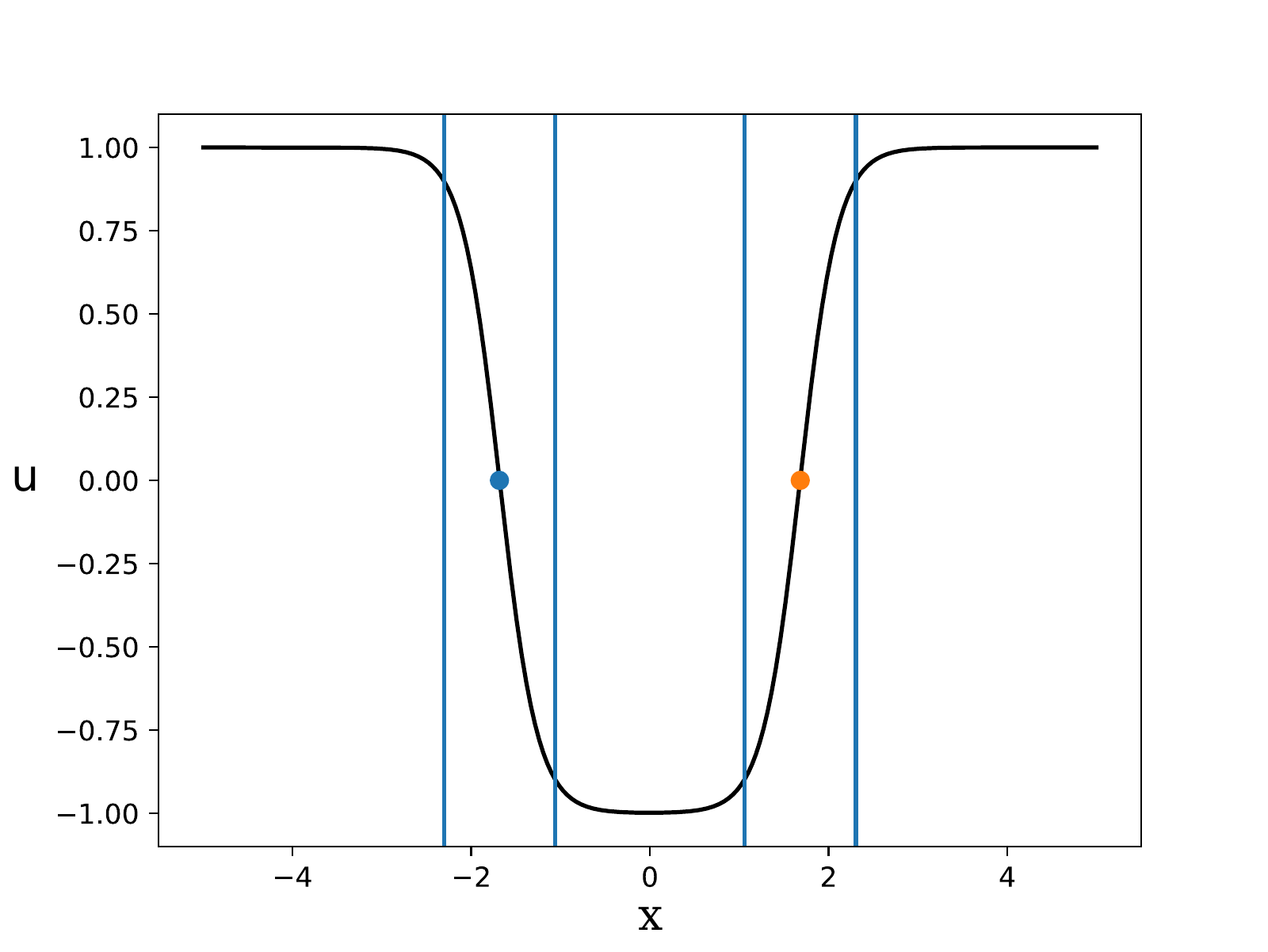} 
    \caption{solution at time $t=1771$.}
  \end{subfigure} 
  \begin{subfigure}[b]{0.32\textwidth}
    \includegraphics[width=\textwidth]{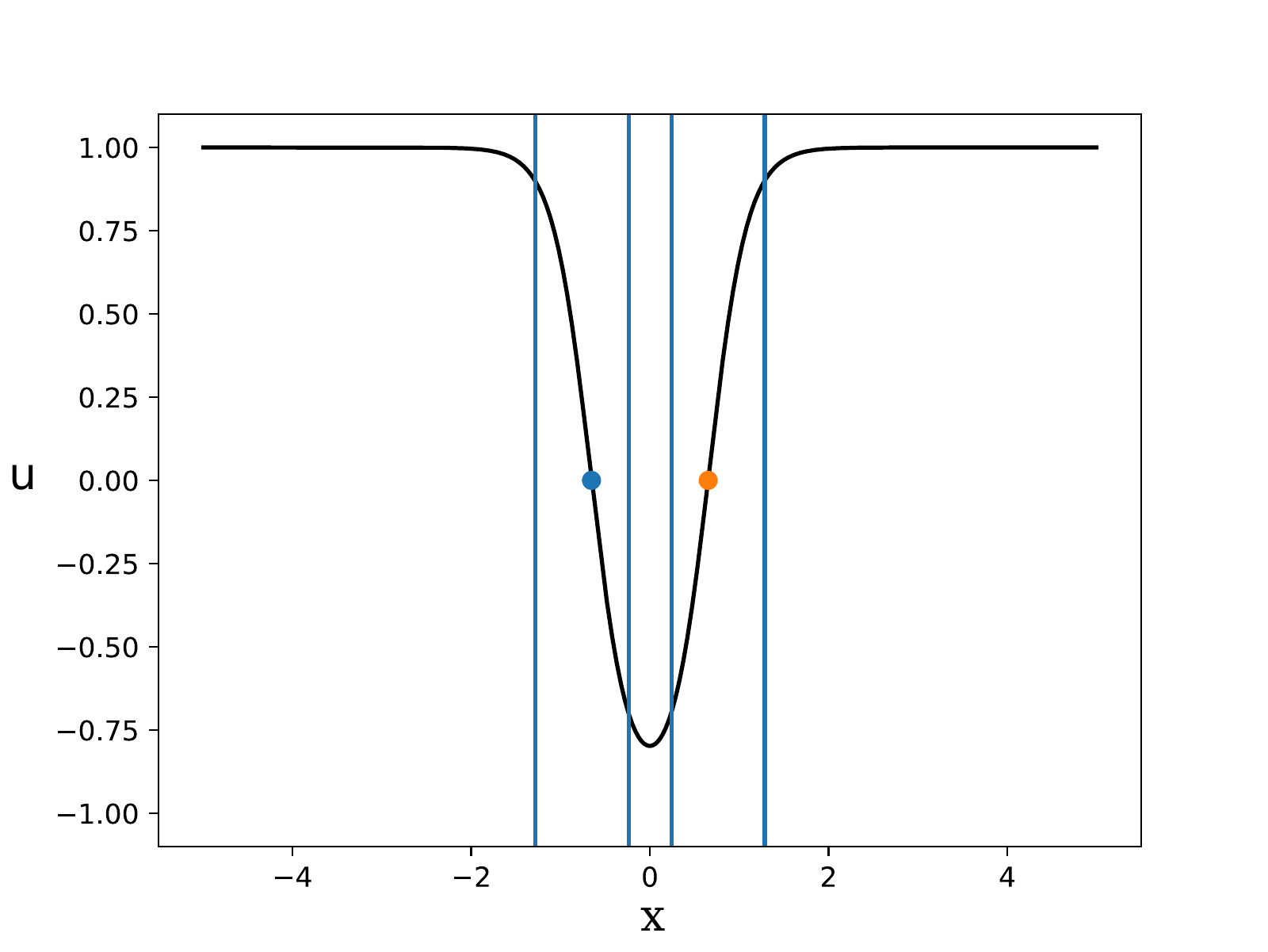}
    \caption{solution at time $t=177216$.}
  \end{subfigure}
  \caption{Evolution of solution for the classical Allen-Cahn equation~\eqref{eq:AC}
    for~\eqref{eq:TL:simple} with $L=5$, $\eps=0.3$ and $\alpha=2$.}
  \label{fig:alpha2}
\end{figure}

We observe that suitable initial conditions $u_0$ get attracted very quickly to solutions composed of multiple, say $n$, very sharp interfaces. 
Each interface is known to have width $\cO(\eps)$ for $\alpha=2$; see Section~\ref{subsec:classical}.
For a long time, this $n$-interface solution appears to be stationary, but it evolves on an exponentially long time scale $\cO(\txte^{K/\eps})$ for some constant $K>0$; see again Section~\ref{subsec:classical} for details. 
On this exponentially long time scale the interfaces move towards each other and then annihilate; see Figure~\ref{fig:alpha2}. 
This effect is the essence of \emph{metastable behavior}, i.e., apparent stationarity on extremely long, yet still transient, time scales.

\medskip 
In this work, we are interested in the influence of the parameter $\alpha$ on metastable interface motion. 
It is natural to ask how the order of the fractional Laplacian influences the interface speed as well as  the interface width. 
For $\alpha=2$, these effects have been quantified, see, e.g.,~\cite{CarrPego}.
However, for $\alpha=(0,2)$ a precise quantification has not been carried out yet.

However, this problem has been studied for~\eqref{eq:AC} with the fractional Laplacian on the real line.
For $\alpha\in(1,2)$ the dependence of the interface speed has been quantified by heuristic arguments and verified numerically via a pseudo-spectral method~\cite{NecNepGol:2008}.
Moreover, equation~\eqref{eq:AC} with periodic potential $F$ and external forcing on the real line has been studied as a model in crystal dislocation dynamics, see~\cite{GoMo:2012, DiFiVa:2014, DiPaVa:2015, PaVa:2015, PaVa:2016, PaVa:2017}.
They characterize the long-time behavior of solutions which includes metastable scenarios and give rigorous proofs.

It is not obvious if these results carry over to the fractional Allen-Cahn equation~\eqref{eq:AC} with homogeneous Neumann boundary conditions.
For example, the fractional Laplacian on the real line is uniquely defined, e.g. see~\cite{Kwa:2017}, whereas on bounded intervals several distinct definitions exist, see~\cite{SerVal:2014, Grubb:2016, DiRoVa:2017, Lischke+etal:2020}.
However our numerical studies of the interface motion for~\eqref{eq:AC} with homogeneous Neumann boundary conditions indicate that the interface speed is governed by the same asymptotic dependence on~$\eps$ and~$\alpha$. 
Additionally, we characterize the dependence of the interface width on these parameters.
In particular, our main observations are:
\begin{itemize}
 \item[(R1)] \textit{Interface speed:} 
  For $\alpha\in(0.5,2)$, the magnitude of the interface speed in case of two interfaces is approximately
  \begin{equation} \label{interface.speed}
   |s (\eps,\alpha)|
   \approx \eps^{1+\alpha}\ \frac{4}{\alpha} \frac{2^{\alpha} \Gamma((1+\alpha)/2)}{\sqrt{\pi} |\Gamma(-\alpha/2)|}\ \gamma\ \frac{1}{|x_1 -x_2|^{\alpha}} \,.
  \end{equation}
  where $x_1$ and $x_2$ are the centers of the interfaces and $\gamma := (\int_\R (v'(x))^2 \txtd x)^{-1}$ is the inverse of the semi-norm of a basic layer solution~$v$ of~\eqref{eq:BLS:1}--\eqref{eq:BLS:2}.
 \item[(R2)] \textit{Interface width:} 
 For $\alpha\in(0.2,2)$, the interface width is approximately
 \begin{equation} \label{interface.width}
  w(\eps,\alpha) = b(\alpha) \eps^{\kappa_1 \alpha^{-1} +  \kappa_2}\,,
 \end{equation}
where we numerically estimated $\kappa_1\approx -0.168298$ and $\kappa_2\approx 1.11709$. We note that the numerically estimates suggest the \emph{conjectures} $\kappa_1=1/6$ and $\kappa_2=10/9$. 
\end{itemize}

The paper is structured as follows: 
In Section~\ref{sec:analysis}, we briefly review analytical results for the Allen-Cahn equation involving the Laplacian ($\alpha=2$) with homogeneous Neumann boundary conditions and the fractional Laplacian ($\alpha\in(0,2)$) on the real line, respectively. 
In Section~\ref{sec:numerics}, we describe the numerical setup used to simulate~\eqref{eq:AC} for $\alpha\in(0,2]$. 
Moreover, we compare our numerical method with a spectral method in~$\S$\ref{subsec:comparision_methods}.
The main results are presented and discussed in Section~\ref{sec:results}. 
A brief summary and an outlook to future open problems can be found in Section~\ref{sec:outlook}.
 
\section{Asymptotic analysis}
\label{sec:analysis}

\subsection{The Classical Case}
\label{subsec:classical}

In this section, we briefly review the known results for metastability of the
classical Allen-Cahn equation
\be
\label{eq:AC1}
\partial_t u = \eps^2\partial_{xx} u+u(1-u^2),
\ee
with Neumann boundary conditions posed on the interval $[0,1]$ as studied 
by several authors; here we mainly follow~\cite{CarrPego}. 
Note that considering the interval $[0,1]$ is equivalent to picking $[-L,L]$ up to shifting and 
scaling the $x$-coordinate. 
A key auxiliary tool to construct $n$-interface solutions is to first construct a single layer. 
This requires the solution of the auxiliary two-point boundary value problem
\be
\label{eq:BVP}
\eps^2 \frac{\txtd^2 \phi}{\txtd x^2}=\phi(1-\phi^2),\qquad 
\phi(-\ell/2)=0=\phi(\ell/2).
\ee   
For $\eps>0$ sufficiently small, it can be proven that~\eqref{eq:BVP} has 
a unique solution $\phi(x,\ell)$ for each given $\ell>0$, which is positive for 
$|x|<\ell/2$. 
The shape of $\phi(x,\ell)$ corresponds to a spike centered at $x=0$, i.e., an interior layer solution in the terminology of multiscale analysis~\cite{KuehnBook}. 
The width of this spike is $\cO(\eps)$ and it can also be interpreted as a homoclinic orbit in $(\phi,\phi')$-coordinates to $(0,0)$. 
Next, one wants to construct an interface (layer) with location $h=h(t)$ for~\eqref{eq:AC1}. 
Consider a monotone function 
\benn
\xi\in C^\I(\R,[0,1]),\qquad 
\xi(x)=\left\{
\begin{array}{ll}
0\quad\text{for $x\leq -1$,}\\
1\quad\text{for $x\geq 1$,}\\
\end{array}
\right.
\eenn
and define the \emph{approximate metastable interface}, 
$u^h=u^h(x)$ for some $0<h<1$ by
\be
\label{eq:hlayer}
u^h(x):=-\left[1+\xi\left(\frac{x-h}{\eps}\right)\right]
\phi(x,2h)+\xi\left(\frac{x-h}{\eps}\right)\phi(x-1,2-2h)
\ee
so that $u^h(0)<0$, $u^h(h)=0$, $u^h(1)>0$, and the width of the interface is $\cO(\eps)$.
Obviously one can construct an approximate $n$-interface solution by a similar procedure. 
Since any $n$-interface solution has $n$ natural coordinates given by the positions $h$ of each interface,
one can define an $n$-dimensional manifold $\cM$ in $H^1([0,1])$ parametrized by the positions. 
It is shown in~\cite{CarrPego} that this manifold is locally attracting for large classes of initial data for~\eqref{eq:AC1}. 
Once the solution is close to the approximately invariant manifold $\cM$, there exists an ODE describing the motion of the positions $h$ on $\cM$. 
It turns out that all interface positions move at equal asymptotic speed in a generic setting away from any annihilation events. 
Their speed in this regime can be determined by just looking at one interface. 
It is highly non-trivial to prove that the ODE for its position is given by 
\be
\label{eq:hmotion}
h'=K_0\eps \left[q\left(\frac{\eps}{2h}\right)
-q\left(\frac{\eps}{2-2h}\right)\right]+R_1,\qquad q(r):=F(\phi(0,\ell)),~
r:=\eps/\ell,
\ee  
where $K_0>0$ is a computable order $\cO(1)$ constant, and $R_1$ turns out to be a 
negligible remainder term~\cite{CarrPego}. 
Metastability arises because one can also prove that
\benn
q\left(r\right)=K_1\txte^{-K_2 \ell/\eps}+R_2,
\eenn
for constants $K_{1,2}>0$ and $R_2$ turns out to be another higher-order terms irrelevant for computing the leading-order of the vector field in~\eqref{eq:hmotion}, i.e., the local interface speed is exponentially small for $\alpha=2$. 

\subsection{Analysis of fractional Allen-Cahn equations}
For $\alpha\in(0,2)$, the results for the classical case lead to the following questions:
\begin{itemize}
 \item[(Q1)] Does the metastable interface speed $s=s(\eps,\alpha)$ change?
 If so, what is the graph of $s(\eps,\alpha)$ as a function of $\alpha$ and/or $\eps$? 
 \item[(Q2)] Does the interface width $w=w(\eps,\alpha)$ change? 
 If so, what is the graph of $w(\eps,\alpha)$ as a function of $\alpha$ and/or $\eps$? 
\end{itemize}
The questions (Q1)-(Q2) are our main focus as they provide immediate information
on the relevant dynamical behavior. 

We discuss the ambitious task to extend the results by Carr and Pego in the Section~\ref{sec:outlook}.
Next, we will present the analysis of a fractional Allen-Cahn equation on $\R$,
which provides approximations for the interface speed and time--to--collapse.
In Section~\ref{sec:results} we show by numerical simulations that these formulas also give a good approximation for our model on a bounded interval.

\subsubsection{Analysis of fractional Allen-Cahn equations on $\R$}
Equation~\eqref{eq:AC} with periodic potential $F$ and external forcing on the real line $x\in\R$ models the dynamics of crystal dislocations, see~\cite{GoMo:2012, DiFiVa:2014, DiPaVa:2015, PaVa:2015, PaVa:2016, PaVa:2017}.
First, the evolution of dislocations given as a superposition of transitions with the same orientation has been studied. It has been shown that these transitions repel each other~\cite{GoMo:2012, DiFiVa:2014, DiPaVa:2015}.
Patrizi and Valdinoci~\cite{PaVa:2015, PaVa:2016, PaVa:2017} considered also dislocations given as a superposition of transitions with arbitrary orientations and studied again the long-time behavior.
For example, two transitions of opposite orientation attract each other (if no external force is present), which shows the metastable behavior we are interested in.
In general, they study the long-time behavior of well--prepared initial data modeling an arbitrary (but finite) number of transitions with any order of orientations.
We report here their results for the equation
\be
\label{eq:AC:rescaled}
\partial_t u_\eps = \tfrac1{\eps} \Big(-(-\Delta)^{\alpha/2}u_\eps -\tfrac1{\eps^\alpha} F'(u_\eps)\Big),
\ee
on $x\in\R$ with (non-periodic) potential $F(u) =\tfrac{u^4}4 -\tfrac{u^2}2$ and superposition of transitions with alternating orientations (and note the minor modifications needed along the way):

\medskip
Equations~\eqref{eq:AC} and~\eqref{eq:AC:rescaled} are related via a rescaling of time $t\mapsto t/\eps^{1+\alpha}$.

\medskip
First, a well--prepared initial datum $u_0$ is constructed from basic layer solutions of~\eqref{eq:AC} (instead of initial layer solutions):
A basic layer solution~$v:\R\to[-1,1]$ is a stationary solution of~\eqref{eq:AC}, i.e. it solves 
\begin{equation} \label{eq:BLS:1} 
 -(-\Delta)^{\alpha/2}v -f(v) = 0 \,, \qquad x\in\R\,, 
\end{equation}
and satisfies
\begin{equation} \label{eq:BLS:2} 
 v'(x)>0 \text{ for all } x\in\R\,, \quad v(-\infty)=-1\,, \quad v(0)=0\,, \quad v(+\infty)=1\,.
\end{equation}
Here, with a slight abuse of notation $-(-\Delta)^{\alpha/2}$ denotes the fractional Laplacian acting on functions on the real line.
Note that the fractional Laplacian given on the real line is equivalent to a singular integral representation, see~\cite{CafSil:2007,Kwa:2017}.
The existence and uniqueness (due to $v(0)=0$) of a basic layer solution has been proved in~\cite[Thm. 2.4]{CabSir:2015}.

\medskip
Then, a well--prepared initial datum $u_0$ is constructed from shifted basic layer solutions~$v$ of alternating orientation.
For example, we study the evolution of a transition layer given by 
\begin{equation} \label{eq:TL:simple} 
 u_\eps^0(x)
 := v\bigg(\frac{x-x_1^0}{\eps}\bigg) + v\bigg(-\frac{x-x_2^0}{\eps}\bigg) +1
\end{equation}
for some $x_2^0 < x_1^0$.
More generally, we study the evolution of an initial datum $u_\eps^0$ given by
\begin{equation} \label{eq:TL} 
 u_\eps^0(x) := \sum_{i=1}^{2K} v_{\eps,i}^0(x) +1
\end{equation}
where $K\in\N$, $v_{\eps,i}^0(x) = v\Big(\zeta_i \frac{x-x_i^0}{\eps}\Big)$ with $x_{2K}^0 < x_{2K-1}^0 < \ldots < x_2^0 < x_1^0$ and
\[ \zeta_i = \begin{cases} +1 &\text{if $i$ is odd,} \\ -1 &\text{if $i$ is even,} \end{cases} 
     \qquad i\in\{1,2,\ldots,2K\} \,.
\]
 
It can be shown that, for sufficiently small $\eps>0$, the solution $u_\eps(x,t)$ of~\eqref{eq:AC:rescaled} with initial datum $u_\eps^0$ is approximately of the form
\begin{equation}
  u_\eps(x,t) 
   \approx \sum_{i=1}^{2K} v\Big(\zeta_i \frac{x-x_i(t)}{\eps}\Big) +1 \,,
\end{equation}
where the functions $x_i(t)$ satisfy approximately (i.e., up to higher order terms in $\eps$) a system of ODEs
\begin{equation} \label{ODE:centers}
 \begin{cases}
   \frac{\txtd x_i}{\txtd t} = \frac{\eps^{1+\alpha}}{\alpha} 4 \frac{2^{\alpha} \Gamma((1+\alpha)/2)}{\sqrt{\pi} |\Gamma(-\alpha/2)|} \gamma \sum_{i\ne j} \zeta_i \zeta_j \frac{x_i -x_j}{|x_i -x_j|^{1+\alpha}} \,, \\
   x_i(0) = x_i^0 \,,
 \end{cases}
\end{equation}
for $i\in\{1,2,\ldots,2K\}$ and $\gamma := (\int_\R (v'(x))^2 \txtd x)^{-1}$.
The centers $x_i$ are initially ordered as $x_{2K}^0 < x_{2K-1}^0 < \ldots < x_2^0 < x_1^0$, then for sufficiently small times $t>0$ this order will persist for the solution of~\eqref{ODE:centers}, i.e.
\begin{equation} \label{ineq:centers}
  x_{2K}(t) < x_{2K-1}(t) < \ldots < x_2(t) < x_1(t) \,.
\end{equation}
System~\eqref{ODE:centers} is well-defined until the first collision at time $T_C$, which is defined as the time such that~\eqref{ineq:centers} holds for all $t\in[0,T_C)$ and
\begin{equation} \label{cond:TC}
 \exists i_C \in\{1,2,\ldots,2K-1\}\,: \qquad
 x_{i_{C} +1}(T_C) =x_{i_C}(T_C)\,. 
\end{equation} 

More precisely, it is proved that the solution $u_\eps$ approaches a superposition of sharp transitions with moving centers $x_i(t)$ in the following sense:
\begin{thm}[cf. {\cite[Thm. 1.1]{PaVa:2015}}] \label{thm:ueps}
Suppose $\alpha\in[1,2]$.
Let 
\begin{equation}\label{eq:AL}
 v(t,x) = \sum_{i=1}^{2K} \sgn(\zeta_i(x-x_i(t))) +1\,,
\end{equation}
where $\sgn$ is the sign function and $(x_i(t))_{i=1,\ldots,2K}$ is the solution to~\eqref{ODE:centers}.
Then, for every $\eps>0$ there exists a unique solution $u_\eps$ of~\eqref{eq:AC:rescaled}.
Furthermore, as $\eps\to0^+$, the solution $u_\eps$ exhibits the following asymptotic behavior:
\begin{align}
 \label{est:above}
 \limsup_{\stackrel{(t',x')\to(t,x)}{\eps\to 0^+}} u_\eps(t',x') 
 \leq \limsup_{(t',x')\to(t,x)} v(t',x')
\intertext{and}
 \label{est:below}
 \liminf_{\stackrel{(t',x')\to(t,x)}{\eps\to 0^+}} u_\eps(t',x') 
 \geq \liminf_{(t',x')\to(t,x)} v(t',x')
\end{align}
for any $(t,x)\in[0,T_C)\times\R$.
\end{thm}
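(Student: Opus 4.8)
The plan is to prove the two one-sided estimates~\eqref{est:above} and~\eqref{est:below} by the method of sub- and supersolutions together with the comparison principle for the nonlocal parabolic equation~\eqref{eq:AC:rescaled}, and then to pass to the sharp-interface limit through Barles--Perthame half-relaxed limits. Since~\eqref{eq:AC:rescaled} is a semilinear integro-differential equation with a bounded reaction term, I would first establish, for each fixed $\eps>0$, existence and uniqueness of a viscosity solution $u_\eps$ enjoying a comparison principle, via the standard Crandall--Ishii--Lions framework adapted to integro-differential operators; this is the assertion made in the statement and I take it as the starting point. Writing
\benn
 \bar u(t,x) := \limsup_{\stackrel{(t',x')\to(t,x)}{\eps\to0^+}} u_\eps(t',x')\,,
 \qquad
 \underline u(t,x) := \liminf_{\stackrel{(t',x')\to(t,x)}{\eps\to0^+}} u_\eps(t',x')\,,
\eenn
the two claims amount to showing that $\bar u$ lies below, and $\underline u$ lies above, the sharp profile~\eqref{eq:AL} on $[0,T_C)\times\R$.

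The heart of the argument is the construction of accurate barriers built from the basic layer solution of~\eqref{eq:BLS:1}--\eqref{eq:BLS:2}. For a small parameter $\delta>0$ I would set
\benn
 w_\eps^{\pm}(t,x) := \sum_{i=1}^{2K} v\Big(\zeta_i \frac{x-x_i^{\pm}(t)}{\eps}\Big) +1 \pm \delta\,\psi_\eps(t,x)\,,
\eenn
where $v$ denotes the layer profile, $x_i^{\pm}(t)$ solve a slightly perturbed version of~\eqref{ODE:centers} (with the interface speed increased/decreased by an $\cO(\delta)$ amount), and $\psi_\eps\ge0$ is a small corrector chosen to absorb the residual. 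Because the fractional Laplacian is linear and each rescaled, reflected layer again solves the stationary equation by~\eqref{eq:BLS:1},
\benn
 -(-\Delta)^{\alpha/2}\Big(\sum_{i} v\big(\zeta_i(x-x_i^{\pm})/\eps\big)+1\Big) = \eps^{-\alpha}\sum_{i} f\big(v(\zeta_i(x-x_i^{\pm})/\eps)\big)\,.
\eenn
Hence the parabolic residual $N[w]:=\partial_t w-\tfrac1\eps\big(-(-\Delta)^{\alpha/2}w-\eps^{-\alpha}f(w)\big)$, evaluated at $w_\eps^{\pm}$ and first ignoring the corrector, collapses to a transport term $-\eps^{-1}\sum_i \zeta_i \dot x_i^{\pm}\, v'(\zeta_i(x-x_i^{\pm})/\eps)$ coming from $\partial_t$ and a nonlinear mismatch $\eps^{-1-\alpha}\big(\sum_i f(v_i)-f(\sum_i v_i+1)\big)$ coming from the reaction acting on the full superposition rather than on the individual layers.

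The decisive computation is to evaluate this mismatch near each interface. Away from $x_j$ the remaining layers are close to their asymptotic values $\pm1$, so localizing near $x_j$ and using the polynomial tail expansion $v(y)-\sgn(y)\sim \mathrm{const}\cdot|y|^{-\alpha}$ of the basic layer (established in~\cite{CabSir:2015} and the crystal-dislocation references) shows that the interaction between interfaces $i$ and $j$ scales like $|x_i-x_j|^{-\alpha}$. Testing the linearized equation against $v'$ — which spans the one-dimensional kernel (by uniqueness in~\eqref{eq:BLS:2}) of the linearized operator $L=(-\Delta)^{\alpha/2}+f'(v)$ — imposes the Fredholm solvability condition that produces exactly the factor $\gamma=(\int_\R (v')^2\,\txtd x)^{-1}$ and the pairwise interaction $\zeta_i\zeta_j(x_i-x_j)/|x_i-x_j|^{1+\alpha}$ of~\eqref{ODE:centers}. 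Choosing $x_i^{\pm}$ to solve~\eqref{ODE:centers} up to the $\cO(\delta)$ speed correction then makes the transport term and the mismatch cancel to leading order, leaving a residual of definite sign so that $N[w_\eps^+]\ge0\ge N[w_\eps^-]$ for $\eps$ small. Since the initial datum is the exact well-prepared superposition, the barriers can be arranged to sandwich $u_\eps$ at $t=0$; the comparison principle then yields $w_\eps^-\le u_\eps\le w_\eps^+$ on $[0,T_C)$, and letting first $\eps\to0^+$ (so that the layers converge to the sign functions and $\psi_\eps\to0$) and then $\delta\to0$ (using continuous dependence of the perturbed trajectories on $\delta$) gives~\eqref{est:above}--\eqref{est:below}.

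I expect the main obstacle to be the uniform control of the nonlinear-interaction residual up to the collision time $T_C$. This requires (i) sharp, quantitative tail asymptotics for $v$ and $v'$; (ii) a careful splitting of $\sum_i f(v_i)-f(\sum_i v_i+1)$ isolating the leading pairwise interaction from genuinely higher-order multi-interface terms; and (iii) the ODE analysis ensuring that the ordering~\eqref{ineq:centers} persists and the minimal gap $\min_i|x_{i+1}-x_i|$ stays bounded below on $[0,T_C)$, so that the error estimates remain uniform as annihilation is approached. The restriction $\alpha\in[1,2]$ enters precisely here, through the integrability and regularity thresholds needed for these nonlocal tail and commutator estimates; for $\alpha<1$ the slower decay makes the interaction sum more delicate, which is consistent with the rigorous result covering only $[1,2]$ while the formal speed law~\eqref{interface.speed} is observed numerically down to $\alpha\approx0.5$.
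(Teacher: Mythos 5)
Your strategy is sound in principle, but it is not the paper's argument: what you outline is, in essence, the proof of the cited result \cite[Thm.~1.1]{PaVa:2015} itself (barriers built from superposed layer solutions with perturbed center dynamics and a corrector, viscosity-solution comparison, half-relaxed limits), whereas the paper proves the theorem by a short reduction \emph{to} that result. Concretely, the paper's sketch observes (i) that by the maximum principle for~\eqref{eq:AC:rescaled} any solution with well-prepared data $u_\eps^0$ taking values in $[-1,1]$ stays in $[-1,1]$ for all time, and (ii) that one may therefore replace the double-well potential $F$ by a periodic potential $F_{per}\in C^{2,\beta}(\R)$ agreeing with $F$ on $[-1,1]$; the modified problem is exactly the crystal-dislocation model treated in~\cite{PaVa:2015}, so its Theorem~1.1 applies, and since the solutions of the modified and original problems coincide, the conclusion carries over unchanged. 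This reduction is the only genuinely new content of the paper's proof, and it is the one idea your proposal does not contain: the cited theorem is stated for periodic (Peierls--Nabarro type) potentials, so some device of this kind is needed before one can quote it for the double-well nonlinearity. The trade-off between the two routes is clear. The paper's argument is a few lines, rigorous modulo the citation, and isolates the potential-modification trick; your direct route would be self-contained and would in principle also explain where the ODE~\eqref{ODE:centers} comes from, but it amounts to redoing a long and delicate proof, and as written the decisive steps --- the construction of the corrector $\psi_\eps$, the quantitative splitting of $\sum_i f(v_i)-f\bigl(\sum_i v_i +1\bigr)$ into the leading pairwise interaction plus controllable errors, and the uniform residual estimates --- are identified as obstacles rather than carried out. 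Two minor inaccuracies: the minimal gap $\min_i|x_{i+1}(t)-x_i(t)|$ is bounded below only on compact subintervals of $[0,T_C)$, not on all of $[0,T_C)$, which is in fact all your locally uniform estimates need; and, according to the paper's remark, the restriction $\alpha\in[1,2]$ reflects that $F\in C^{2,\beta}$ suffices there while $\alpha\in(0,1)$ would require $F\in C^{3,\beta}$ (cf.~\cite{DiFiVa:2014}), rather than an integrability threshold for the tail interactions.
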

\begin{proof}[Sketch of proof]
 The initial conditions $u_\eps^0$ under consideration take values in $[-1,1]$.
 Due to a maximum principle for~\eqref{eq:AC:rescaled}, the corresponding solution $u_\eps$  of~\eqref{eq:AC:rescaled} with initial datum $u_\eps^0$ will again satisfy $-1\leq u_\eps (t,x)\leq 1$ for all $(t,x)$.
 We consider~\eqref{eq:AC:rescaled} with the periodic potential $F_{per}$ such that $F_{per}(u) =F(u)$ for all $u\in[-1,1]$. 
 Then $F_{per}\in C^{2,\beta}(\R)$ is H\"older continuous.
 Now we can use the result of~\cite[Thm. 1.1]{PaVa:2015} for the modified problem.
 However, since $-1\leq u_\eps (t,x)\leq 1$ for all $(t,x)$, the statement carries over to the original problem unchanged.
\end{proof}
\begin{remark} 
\begin{enumerate}
\item The restriction to $\alpha\in[1,2]$ seems to be due to technical conditions in the proofs. In case of $\alpha\in(0,1)$, the assumption $F\in C^{3,\beta}(\R)$ for some $\beta>0$ is needed, see~\cite{DiFiVa:2014}.
\item In the studies on crystal dislocation dynamics the stable stationary states are taken to be 0 and 1 compared to -1 and 1 as in our setting.  
That means, we have to include e.g. an affine transformation $y \mapsto -2y+1$ which gives an additional multiplicative constant~$4$.
\item
Moreover, we have to include a normalization factor $\frac{2^{\alpha} \Gamma((1+\alpha)/2)}{\sqrt{\pi} |\Gamma(-\alpha/2)|}$, due to the singular integral representations used in~\cite[Lemma 5.1]{stinga_torrea} compared with~\cite[(1.2)]{PaVa:2015}.  
\end{enumerate}
\end{remark}

In case of two initial transitions with opposite orientation the following estimate for the time--to--collision~$T_C$ holds 
\begin{thm}[{cf. \cite[Thm. 1.2]{PaVa:2015}}] \label{thm:TC}
 Let $K=1$. Let $d_0 := x_1^0 -x_2^0 >0$.
 Then, 
 \begin{equation} \label{eq:TC}
 T_C = \tfrac{C_\alpha d_0^{1 +\alpha}}{2(1 +\alpha)\gamma}
 \qquad\text{with } 
  C_\alpha 
  :=1/\bigg( \frac{4}{\alpha} \frac{2^{\alpha} \Gamma((1+\alpha)/2)}{\sqrt{\pi} |\Gamma(-\alpha/2)|}\bigg)
  =\frac{\Gamma(1-\alpha)}{2\sec(\alpha \frac{\pi}{2})}\,.
 \end{equation}
\end{thm}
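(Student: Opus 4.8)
The plan is to reduce the two--transition system~\eqref{ODE:centers} to a single scalar ODE for the gap between the interfaces, integrate it by separation of variables to read off $T_C$, and then verify that the resulting constant agrees with the closed form $\Gamma(1-\alpha)/(2\sec(\alpha\tfrac{\pi}{2}))$ by means of standard $\Gamma$-function identities.

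First I would specialize~\eqref{ODE:centers} to $K=1$, so $i\in\{1,2\}$ with $\zeta_1=+1$, $\zeta_2=-1$ and hence $\zeta_1\zeta_2=-1$. Writing
\[
 A := \frac4\alpha\,\frac{2^\alpha\Gamma((1+\alpha)/2)}{\sqrt\pi\,|\Gamma(-\alpha/2)|}=\frac1{C_\alpha},
\]
the velocity field (with the prefactor $\eps^{1+\alpha}$ absorbed into the time variable via the rescaling $t\mapsto t/\eps^{1+\alpha}$, so that $T_C$ is measured on the time scale of~\eqref{eq:AC:rescaled}) reduces to $\dot x_1=-A\gamma\,(x_1-x_2)/|x_1-x_2|^{1+\alpha}$ and $\dot x_2=+A\gamma\,(x_1-x_2)/|x_1-x_2|^{1+\alpha}$. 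Introducing the gap $d(t):=x_1(t)-x_2(t)$, which stays positive on $[0,T_C)$ by~\eqref{ineq:centers} and satisfies $d(0)=d_0$, subtraction yields the autonomous equation
\[
 \dot d = -2A\gamma\,d^{-\alpha}.
\]
Since $\dot d<0$ the gap decreases monotonically; and because $K=1$ there is only one pair, so the collision condition~\eqref{cond:TC} is simply $d(T_C)=0$, with no competing earlier collision to worry about. Separating variables, $d^{\alpha}\,\txtd d=-2A\gamma\,\txtd t$, and integrating from $t=0$ ($d=d_0$) to $t=T_C$ ($d=0$) gives $d_0^{1+\alpha}/(1+\alpha)=2A\gamma\,T_C$, i.e.
\[
 T_C=\frac{d_0^{1+\alpha}}{2(1+\alpha)A\gamma}=\frac{C_\alpha\,d_0^{1+\alpha}}{2(1+\alpha)\gamma},
\]
which is the asserted formula once $C_\alpha=1/A$ is identified with the closed form.

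The remaining, and genuinely technical, step is the special-function identity $C_\alpha=\Gamma(1-\alpha)/(2\sec(\alpha\tfrac{\pi}{2}))$. For $\alpha\in(0,2)$ we have $-\alpha/2\in(-1,0)$, so $\Gamma(-\alpha/2)<0$, and the recurrence $\Gamma(1-\alpha/2)=(-\alpha/2)\Gamma(-\alpha/2)$ gives $|\Gamma(-\alpha/2)|=(2/\alpha)\Gamma(1-\alpha/2)$. Substituting this into $A$ cancels the prefactor $4/\alpha$ and leaves
\[
 C_\alpha=\frac1A=\frac{\sqrt\pi\,\Gamma(1-\alpha/2)}{2\cdot 2^{\alpha}\,\Gamma((1+\alpha)/2)}.
\]
The Legendre duplication formula with $z=(1-\alpha)/2$ yields $\Gamma((1-\alpha)/2)\,\Gamma(1-\alpha/2)=2^{\alpha}\sqrt\pi\,\Gamma(1-\alpha)$, while the reflection formula together with $\sin(\pi(1+\alpha)/2)=\cos(\pi\alpha/2)$ gives $\Gamma((1-\alpha)/2)\,\Gamma((1+\alpha)/2)=\pi/\cos(\pi\alpha/2)$. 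Using the first relation to eliminate $\Gamma(1-\alpha/2)$ and the second to eliminate the product of half-integer factors produces exactly $C_\alpha=\Gamma(1-\alpha)\cos(\pi\alpha/2)/2=\Gamma(1-\alpha)/(2\sec(\alpha\tfrac{\pi}{2}))$.

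I expect the main obstacle to be this last bookkeeping rather than the ODE: one must track the correct branch of $\Gamma$ on $(-1,0)$ and the correct trigonometric sign for $\alpha\in(0,2)$ so that the absolute value $|\Gamma(-\alpha/2)|$ and the factor $\cos(\pi\alpha/2)$ come out with the right signs. Once the two $\Gamma$-identities are applied in the order above, everything else is the elementary separable integration already carried out.
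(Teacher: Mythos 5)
Your proposal is correct, and it is in fact more self-contained than what the paper itself provides: the paper offers no proof of Theorem~\ref{thm:TC} at all, but imports it from Patrizi--Valdinoci~\cite[Thm.~1.2]{PaVa:2015}, with the surrounding Remark recording only the constant adjustments (the factor $4$ from the affine change of state variable $y\mapsto -2y+1$ and the normalization factor $\frac{2^{\alpha}\Gamma((1+\alpha)/2)}{\sqrt{\pi}\,|\Gamma(-\alpha/2)|}$ coming from the different convention for the fractional Laplacian). Your route --- specializing \eqref{ODE:centers} to $K=1$, reducing to the scalar equation $\dot d=-2A\gamma\,d^{-\alpha}$ for the gap $d=x_1-x_2$, and integrating by separation of variables --- is exactly the computation underlying the cited result, and it proves precisely what this paper's $T_C$ means, since here $T_C$ is \emph{defined} as the collision time of the ODE system via \eqref{cond:TC}; the deeper PDE content (that the interfaces of \eqref{eq:AC:rescaled} actually track this ODE up to collision) is Theorem~\ref{thm:ueps} and is not needed for \eqref{eq:TC}. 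Two points you handled deserve emphasis. First, the $\eps^{1+\alpha}$ prefactor: read literally, \eqref{ODE:centers} would give $T_C=C_\alpha d_0^{1+\alpha}/\bigl(2(1+\alpha)\gamma\,\eps^{1+\alpha}\bigr)$, and the $\eps$-free formula \eqref{eq:TC} is the collision time in the rescaled time variable of \eqref{eq:AC:rescaled}; your choice to absorb the prefactor is the right reading of the paper's (admittedly inconsistent) conventions, and it matches the paper's later unrescaled bound $t_{col}\geq (d_0/\eps)^{1+\alpha}C_\alpha/\bigl(2(1+\alpha)\gamma\bigr)$ as well as the numerical value $T_C\approx 3587$ quoted for $\eps=0.01$, $\alpha=0.9$. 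Second, your Gamma-function bookkeeping is correct: $|\Gamma(-\alpha/2)|=(2/\alpha)\,\Gamma(1-\alpha/2)$ for $\alpha\in(0,2)$, duplication at $z=(1-\alpha)/2$ gives $\Gamma\bigl(\tfrac{1-\alpha}{2}\bigr)\Gamma\bigl(1-\tfrac{\alpha}{2}\bigr)=2^{\alpha}\sqrt{\pi}\,\Gamma(1-\alpha)$, and reflection gives $\Gamma\bigl(\tfrac{1-\alpha}{2}\bigr)\Gamma\bigl(\tfrac{1+\alpha}{2}\bigr)=\pi/\cos\bigl(\tfrac{\pi\alpha}{2}\bigr)$, yielding $C_\alpha=\tfrac12\Gamma(1-\alpha)\cos\bigl(\tfrac{\pi\alpha}{2}\bigr)$ as claimed. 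The only omission, a cosmetic one, is $\alpha=1$: there $\Gamma(1-\alpha)$ and $\sec(\alpha\tfrac{\pi}{2})$ both blow up, and the closed form must be read as the removable-singularity limit $\pi/4$; equivalently, your identities hold on $(0,1)\cup(1,2)$ and extend by continuity, which also confirms that for $\alpha\in(1,2)$ the two negative factors $\Gamma(1-\alpha)$ and $\cos(\pi\alpha/2)$ combine to the correct positive constant.
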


Then,~\eqref{est:above} and~\eqref{est:below} imply that for any $x\ne x_c$ where $x_c:=x_1(T_C)=x_2(T_C)$, we have 
\[ \lim_{t\to T_C^-} \lim_{\eps\to 0^+} u_\eps(t,x) =1 \,. \]
That means two dislocations annihilate each other after their collision.
However, the limit of $u_\eps(t,x)$ keeps a memory of them, in the sense that $u_\eps$ at the point $x_c$ does not approach the state~$1$ in the limit.
\begin{thm}[{\cite[Thm. 1.4]{PaVa:2015}}] \label{thm:xc}
 Let $K=1$. Let $u_\eps$ be the solution to~\eqref{eq:AC:rescaled}, then
 \begin{equation}\label{est:limsup:below}
 \liminf_{\stackrel{t\to T_C^{-}}{\eps\to 0^+}} u_\eps(t,x_c) \leq -1 \,.
 \end{equation}
\end{thm}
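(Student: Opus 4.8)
The plan is to read the statement off from the profile convergence in Theorem~\ref{thm:ueps}, combined with the maximum principle that keeps $u_\eps$ in $[-1,1]$. Fix $t_0\in[0,T_C)$ and evaluate the limiting step profile $v$ of~\eqref{eq:AL} at the collision abscissa $x_c$. Because the collision occurs only at $t=T_C$, the centres stay strictly ordered, $x_2(t_0)<x_c<x_1(t_0)$, and with $\zeta_1=+1$, $\zeta_2=-1$ one gets
\[
 v(t_0,x_c)=\sgn(x_c-x_1(t_0))-\sgn(x_c-x_2(t_0))+1=-1-1+1=-1\,.
\]
By continuity of $t'\mapsto(x_1(t'),x_2(t'))$ this value persists on a whole space--time neighbourhood of $(t_0,x_c)$, since $x_c$ remains strictly inside the valley $(x_2(t'),x_1(t'))$; hence $\limsup_{(t',x')\to(t_0,x_c)}v(t',x')=-1$.

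First I would feed this into the upper estimate~\eqref{est:above}, restricting the admissible approach to the constant sequence $(t',x')\equiv(t_0,x_c)$ (which can only lower the $\limsup$). This gives, for every fixed $t_0<T_C$,
\[
 \limsup_{\eps\to0^+}u_\eps(t_0,x_c)\le-1\,.
\]
Since $u_\eps^0$ takes values in $[-1,1]$, the maximum principle for~\eqref{eq:AC:rescaled} used in the proof of Theorem~\ref{thm:ueps} yields $u_\eps\ge-1$ everywhere, so $\liminf_{\eps\to0^+}u_\eps(t_0,x_c)\ge-1$ as well. Combining the two bounds gives $\lim_{\eps\to0^+}u_\eps(t_0,x_c)=-1$ for each $t_0\in[0,T_C)$.

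The last step is a diagonal extraction towards the collision time, and this is where the order of limits ($\eps\to0$ faster than $t\to T_C$) selects the persistent memory. Pick any sequence $t_n\uparrow T_C$ and, using the limit just established, for each $n$ choose $\eps_n\in(0,1/n)$ with $|u_{\eps_n}(t_n,x_c)+1|<1/n$. Then $t_n\to T_C^-$, $\eps_n\to0^+$ and $u_{\eps_n}(t_n,x_c)\to-1$, so that $\liminf_{t\to T_C^-,\,\eps\to0^+}u_\eps(t,x_c)\le-1$, which is exactly~\eqref{est:limsup:below}.

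The elementary steps above rest entirely on~\eqref{est:above} being valid arbitrarily close to (though not at) the collision corner, and on the pointwise bound $u_\eps\ge-1$; the genuine analytic difficulty is thereby displaced into Theorem~\ref{thm:ueps} itself. For a self-contained argument --- and in the periodic-potential formulation where $-1$ is not a barrier and the dip can be a true overshoot --- one would instead build an explicit supersolution on a shrinking neighbourhood of $(T_C,x_c)$: rescale space by $\eps$ about $x_c$, where the two opposite layers sit at rescaled distance $d(t)/\eps$ with $d(t):=x_1(t)-x_2(t)$, and take the joint limit so that $d(t)/\eps\to\infty$, keeping the valley open; then dominate $u_\eps$ from above by a translated pair of basic layer solutions~$v$ whose central value is $\le-1+o(1)$. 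I expect the crux to be verifying the (super)solution property for the \emph{nonlocal} operator: the fractional Laplacian couples $x_c$ to the entire line, so the far-field tails of the two merging layers and the ordering on the parabolic boundary along the colliding centres must be controlled simultaneously. This nonlocal barrier construction, rather than any of the routine steps, is the real obstacle.
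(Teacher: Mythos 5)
Your argument is correct. Note first that the paper gives no proof of Theorem~\ref{thm:xc} at all: it is quoted from \cite[Thm.~1.4]{PaVa:2015}, with the tacit understanding (as in the sketch of Theorem~\ref{thm:ueps}) that the periodic-potential results transfer to the present setting because the maximum principle confines $u_\eps$ to $[-1,1]$. What you have done --- evaluate the limit profile $v\equiv-1$ strictly inside the valley at a fixed $t_0<T_C$, feed the constant approach $(t',x')\equiv(t_0,x_c)$ into~\eqref{est:above} to get $\limsup_{\eps\to0^+}u_\eps(t_0,x_c)\le-1$, then diagonalize along $t_n\uparrow T_C$, $\eps_n\to0^+$ --- is essentially the short argument by which the cited reference deduces its Theorem~1.4 from its Theorem~1.1; its added value here is that it exhibits Theorem~\ref{thm:xc} as a corollary of Theorem~\ref{thm:ueps} exactly as stated in this paper, making the external citation logically redundant. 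Two small points. First, ``the centres stay strictly ordered'' only gives $x_2(t_0)<x_1(t_0)$; to place $x_c$ strictly between them, invoke the ODE~\eqref{ODE:centers}: for $K=1$ and $\zeta_1\zeta_2=-1$ one has $\dot x_1<0<\dot x_2$ on $[0,T_C)$, so $x_1$ decreases and $x_2$ increases monotonically to their common limit $x_c$, whence $x_2(t_0)<x_c<x_1(t_0)$. Second, the maximum-principle step is dispensable: $\limsup_{\eps\to0^+}u_\eps(t_n,x_c)\le-1$ already lets you choose $\eps_n<1/n$ with $u_{\eps_n}(t_n,x_c)<-1+1/n$, and the joint $\liminf$ is bounded by the limit along this sequence; the bound $u_\eps\ge-1$ merely upgrades~\eqref{est:limsup:below} to an equality. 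Your closing paragraph on a nonlocal barrier construction concerns a hypothetical self-contained proof and plays no role in the argument; in the normalization of this paper the theorem really is the easy corollary you gave, with the analytic weight carried entirely by Theorem~\ref{thm:ueps}.
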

Nonetheless, after some time $\Teps$ -- which is only slightly larger than the collision time $T_C$ -- the solution $\ueps$ will become small like $\eps$. More precisely,
\begin{thm}[cf. {\cite[Thm. 1.1 \& Thm. 1.2]{PaVa:2016}}] \label{thm:Teps}
Let $K=1$. 
Under the assumptions of Theorem~\ref{thm:ueps}, let $\ueps$ be a solution of~\eqref{eq:AC:rescaled}.
Then, there exists $\eps_0>0$ such that for all $\eps<\eps_0$ there exist $\Teps,\rho_\eps>0$ such that
\[ \Teps = T_C +o(1)\,, \quad \rho_\eps =o(1) \quad \text{as $\eps\to 0$} \]
and
\begin{equation}
 1-\ueps(\Teps,x) \leq \rho_\eps \quad \text{for all $x\in\R$.}
\end{equation}
Moreover, there exist $\tilde{\eps}_0>0$ and $c>0$ such that for any $\eps<\tilde{\eps}_0$ we have 
\begin{equation}
 |1-\ueps(t,x)| \leq \rho_\eps \exp\big( c\tfrac{\Teps -t}{\eps^{1+\alpha}}\big)\,, \quad \text{for all $x\in\R$ and $t\geq\Teps$.}
\end{equation}
\end{thm}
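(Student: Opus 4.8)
The plan is to separate the statement into its two assertions—first, the existence of a near-collision time $\Teps=T_C+o(1)$ at which $\ueps(\Teps,\cdot)$ is already uniformly $\rho_\eps$-close to the stable state $u\equiv1$, and second, the exponential-in-$t$ relaxation towards $1$ for $t\ge\Teps$—and to treat both by comparison (sub-/supersolution) arguments resting on the maximum principle for~\eqref{eq:AC:rescaled}. As in the sketch of Theorem~\ref{thm:ueps}, the first move is to replace the nonlinearity by a periodic potential $F_{per}$ agreeing with $F$ on $[-1,1]$: since the admissible data take values in $[-1,1]$ and the maximum principle forces $-1\le\ueps\le1$ for all times, the two problems share the same solution, so we may use whichever form of $f$ is convenient for building barriers, and in particular $1-\ueps\ge0$ throughout.

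For the relaxation estimate I would linearise around $u\equiv1$. Writing $w:=1-\ueps\ge0$ and using $(-\Delta)^{\alpha/2}1=0$, equation~\eqref{eq:AC:rescaled} becomes $\partial_t w=-\tfrac1\eps(-\Delta)^{\alpha/2}w+\tfrac1{\eps^{1+\alpha}}f(1-w)$, and since $f(1-w)=w(1-w)(w-2)$ one has $f(1-w)\le-c\,w$ for $w\in[0,\rho_\eps]$ with $c$ any constant below $2=f'(1)$, once $\rho_\eps$ is small. Hence the spatially constant function $\bar w(t):=\rho_\eps\exp\!\big(c(\Teps-t)/\eps^{1+\alpha}\big)$ is a supersolution on $t\ge\Teps$: its fractional Laplacian vanishes and $\partial_t\bar w=-\tfrac{c}{\eps^{1+\alpha}}\bar w\ge\tfrac1{\eps^{1+\alpha}}f(1-\bar w)$, while $0$ is a subsolution. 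Provided $w(\Teps,\cdot)\le\rho_\eps$—which is precisely the first assertion—the comparison principle yields $0\le w(t,x)\le\bar w(t)$, i.e. $|1-\ueps|=1-\ueps\le\rho_\eps\exp(c(\Teps-t)/\eps^{1+\alpha})$, as claimed.

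The substantive work is the first assertion, that the post-collision remnant heals to within $\rho_\eps$ of $1$ after only an additional time $o(1)$. The main obstacle is that the nonlinearity is \emph{bistable}: $u\equiv-1$ is stable as well, so a localised region left near $-1$ after the collision (Theorem~\ref{thm:xc} shows $\ueps(\cdot,x_c)$ can reach $-1$) will not relax to $+1$ unless it is \emph{sub-critical}, i.e. lies below the unstable ``bubble'' steady state of~\eqref{eq:AC:rescaled}. The plan is therefore to freeze the dynamics at a time $t_1=T_C-o(1)$ where, by Theorem~\ref{thm:ueps}, the configuration is a single sharply localised depression of width $\cO(\eps)$ around $x_c$, and to dominate it from below by an explicitly constructed moving subsolution built from a rescaled layer/bubble profile whose amplitude and width are tuned so that (i) it sits beneath $\ueps(t_1,\cdot)$ and (ii) it is strictly increasing under the flow, sweeping up to $1$ on the reaction time scale $\eps^{1+\alpha}$; integrating the resulting differential inequality then produces $\Teps=T_C+o(1)$ and $\rho_\eps=o(1)$ with $1-\ueps(\Teps,\cdot)\le\rho_\eps$. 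Verifying the sub-criticality of the remnant—equivalently, that two colliding transitions of opposite orientation do not leave behind a stable nucleus of the $-1$ phase—and matching the barrier's $\eps$-scaling against the nonlocal operator is the hard part; this is where the nonlocality of $(-\Delta)^{\alpha/2}$ and the restriction $\alpha\in[1,2]$ enter, and it is the step for which I would lean on the quantitative estimates of~\cite{PaVa:2016}.
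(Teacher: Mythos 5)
The paper itself contains no proof of Theorem~\ref{thm:Teps}: the result is imported wholesale from \cite[Thm.~1.1 \& Thm.~1.2]{PaVa:2016}, the tacit justification for transferring it to the present (non-periodic) potential being the same maximum-principle/periodic-potential reduction spelled out in the sketch of proof of Theorem~\ref{thm:ueps}. Your proposal therefore cannot diverge from the paper on the hard part, and indeed it does not: for the first assertion ($\Teps=T_C+o(1)$, $\rho_\eps=o(1)$) you sketch a plausible barrier strategy (a sub-critical post-collision remnant swept up to $1$ by a moving subsolution, which is broadly the spirit of the cited proof), but you explicitly fall back on the quantitative estimates of \cite{PaVa:2016} --- which is exactly the paper's own position. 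Where you genuinely add something is the second assertion: the computation $f(1-w)=w(1-w)(w-2)\le -c\,w$ for $0\le w\le\rho_\eps$ and any fixed $c<2=f'(1)$, combined with the spatially constant supersolution $\bar w(t)=\rho_\eps\exp\bigl(c(\Teps-t)/\eps^{1+\alpha}\bigr)$ and the fact that $(-\Delta)^{\alpha/2}$ annihilates constants, is correct and yields an elementary, self-contained proof that the first assertion implies the exponential-decay estimate; one only needs the comparison principle for bounded solutions of \eqref{eq:AC:rescaled} on $\R$, plus $w\ge 0$ from the maximum principle so that $|1-\ueps|=w$. In summary, your reduction coincides with the paper's, your conditional argument for the decay bound is sound and more explicit than anything in the paper, and the core first assertion remains --- for you as for the authors --- a citation rather than a proof; it should be presented as such rather than as a completed argument.
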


\section{Numerical Methods}
\label{sec:numerics}

In recent years the numerical simulations of partial differential equations with fractional space derivatives has attracted a lot of attention. 
Some references on (various forms of) fractional Allen-Cahn equations in one and multi-dimensional spatial settings are the following:~\cite{BurHalKay:2012,Zeng+etal:2014,StoYue:2015,AchleitnerKuehn2,SimYanMor:2015,SonXuKar:2016,ZhaWenFen:2016,HouTanYan:2017,AlzKha:2019,LeeLee:2019,LiuCheWan:2020,HePanHu:2020}.

In order to discretize \eqref{eq:AC}, we consider an alternative formulation of the fractional Laplacian, based on the Caffarelli-Silvestre extension~\cite{CafSil:2007}, as presented in \cite{stinga_torrea} for the Neumann problem.
Consider the solution $\UU: \R_+ \times \Omega  \times \R_+ \to \R$ to:
\begin{subequations}
  \label{eq:lifted_eq}
\begin{align}
  -\operatorname{div}\left( y^{1-\alpha} \UU(t) \right) &=0\,,  && \text{ in $\Omega \times \R_+$ },\ \forall t>0\,, \\
  d_\alpha \trace \UU_t + \eps^2 \partial_\nu^\alpha \UU   &=-d_\alpha f(\trace \UU )\,, && \text{ in $\Omega \times \{0\} \times \R_+$ }, \\
  \partial_\nu \UU(t) &=0\,,  &&\text{ on $\partial \Omega \times \R_+$},\ \forall t>0\,,\\
  \trace \, \UU(0)&=u_0\,, && \text{ in $\Omega$.}                   
\end{align}
\end{subequations}
Here $d_\alpha:=2^{1-\alpha}\Gamma(1-\alpha/2)/\Gamma(\alpha/2)$, $\partial_\nu$ denotes the
normal derivative with respect to the lateral boundary,
$\trace $ denotes the trace operator with respect to the artificial variable $y$ at $0$  and $\partial_\nu^\alpha$ is defined by
\begin{align*}
  \partial_\nu^{\alpha}  \UU:=  -\lim_{y\to 0^+}{y^{1-\alpha} \partial_y \, \UU(\cdot,y)}.
\end{align*}
It can then be proven that if $\UU$ solves (\ref{eq:lifted_eq}), then $\trace \UU$ solves~\eqref{eq:AC}.

The discretization is based on an semi-implicit BDF2 discretization of the time variable, and a $hp$-finite element method to discretize the $x$ and $y$ variables, similar to what was presented in~\cite{tensor_fem} for the linear stationary case and~\cite{pde_frac_parabolic} for the linear time-dependent fractional diffusion problem, but which is based on a low order
discretization in space. An analogous $hp$-type method for the linear fractional heat equation,
but using different time-stepping and Dirichlet boundary condition was analyzed in detail in
\cite{fractional_heat}.

The semi-implicit BDF2 (SBDF2) method for the ODE  $u_t = \mathscr{L} u + f(t,u)$ with stiff operator $\mathscr{L}$ is given by
\begin{align*}
u^{n+1}&=\frac{4}{3} u^n - \frac{1}{3} u^{n-1} + \frac{2}{3} \Delta t\left[2 f(t_n,u^n) - f(t_{n-1},u^{n-1})\right] + \frac{2}{3} \Delta t \mathscr{L} u^{n+1}.
\end{align*}
For the discretization in the auxiliary variable $y$, we fix $\sigma \in (0,1)$ and 
consider a graded mesh $\cT_y:=\{y_0,\dots y_N\}$ with $y_0:=0$, $y_{j}:=\cY \sigma^{N-j}$. Here $\cY>0$ is
a cutoff parameter. In $\Omega:=(-L,L)$, we consider a uniform mesh with size $h>0$, denoted by $\cT_x$.
Denoting by $\cS^{p,1}(\cT)$ the space of continuous piecewise polynomials of degree $p$, on  a triangulation $\cT$ we solve in each time step
for $\UU_h^{n+1} \in \cS(\cT_x) \otimes \cS(\cT_y)$ satisfying
\begin{multline*}
  \frac{2\Delta t}{3d_\alpha}\int_{0}^{\cY}{y^{1-\alpha}\left(\nabla_{xy} \UU_h^{n+1}(t),\nabla_{xy} \VV_h\right)_{L^2(\Omega)}\,dy}
  +\left(\trace \UU_h^{n+1},\trace {\VV_h}\right)_{L^2(\Omega)} \\
  = \Big(\frac{4}{3}\trace \UU_h^{n} - \frac{1}{3}\trace \UU_h^{n-1},
    \trace {\VV_h}\Big)_{L^2(\Omega)}    
    +\frac{2\Delta t}{3} \left(f(\trace{\UU}^{n}) - f(\trace{\UU}^{n-1}),\trace \VV_h\right)_{L^2(\Omega)}.
\end{multline*}
for all test functions $\VV_h \in \cS(\cT_x) \otimes \cS(\cT_y)$.
Note that the nonlinearity is treated explicitly.  
Since we expect this contribution to be non-stiff this should not cause concern.
The fact that we replaced the infinite cylinder $\Omega \times \R_+$ with $\Omega \times (0,\cY)$ can be justified by the fact that
when expanding $\UU$ in an eigenbasis of the Laplacian, all contributions (except for the constant one) decay exponentially.
(See~\cite{tensor_fem,fractional_heat} for details in the Dirichlet case, the Neumann case behaves analogously). 
The cutoff of the constant contribution does not impact the result since we used homogeneous Neumann conditions on the artificial boundary, which capture the constant mode exactly.
When referencing this algorithm, we will refer to it as the ``$hp$-method''.

\smallskip
In order to be able to compare the results obtained by our $hp$-FEM type approach, we also implemented and ran our experiments using a simpler spectral type method
proposed in \cite{spectral_code}. 
It directly exploits the spectral definition of the fractional Laplacian by using a discrete cosine transform. 
For time discretization we use an IMEX-type Euler method. 
When referencing this algorithm, we will refer to it as the ``spectral--method''.

\subsection{Detecting an interface and measuring speeds}
In order to extract information on the behavior of the interfaces, we need to fix our methodology for
extracting the quantities of interest from the numerical solutions.
For simplicity, even when using a high order method for solving~\eqref{eq:AC} numerically, our measurements are always taken from a 
piecewise linear approximation to the numerical solution. 
This is done by interpolation on a finer grid.

Since the interfaces need a certain time to form, and we expect the interfaces to collapse after some time, with the scales of these timings depending on $\eps$ and $\alpha$, we use an (empirically determined) estimate for the time  until the collapse happens $t_{col}$, i.e., the solution becomes equal to the constant $1$ function. 
When measuring we then assume that for $\frac{1}{4}t_{col}$ the interface has already formed and assume that afterwards, the interfaces remained stable for a duration of  $\frac{1}{10}t_{col}$ and we can do accurate measurements.

When computing the speed of the interface, we consider the speed of the zeros of the numerical solution.
In order to determine the width of the interface, we made the very simplistic assumption that an interface happens whenever the solution dips between $\max(u)-\delta$ and $\min(u)+\delta$.
For the images chosen here we used $\delta:=0.1$. 
This gave good correspondence of the wave speed determined by the zeros of $u$ and the speed of the midpoint of the interface.
We also tested using $\delta=0.01$ and $\delta \approx \varepsilon^{\alpha}$, but $\delta=0.1$ seemed to give the most robust results.
Both width and speed are computed by taking the average over 100 measurements in the time-frame $\big(\frac{1}{4}t_{col},\frac{7}{20}t_{col}\big)$. Figure~\ref{fig:ex_interfaces} shows a typical situation.
 
\begin{figure}[h]
  \center
  \includegraphics[width=12cm]{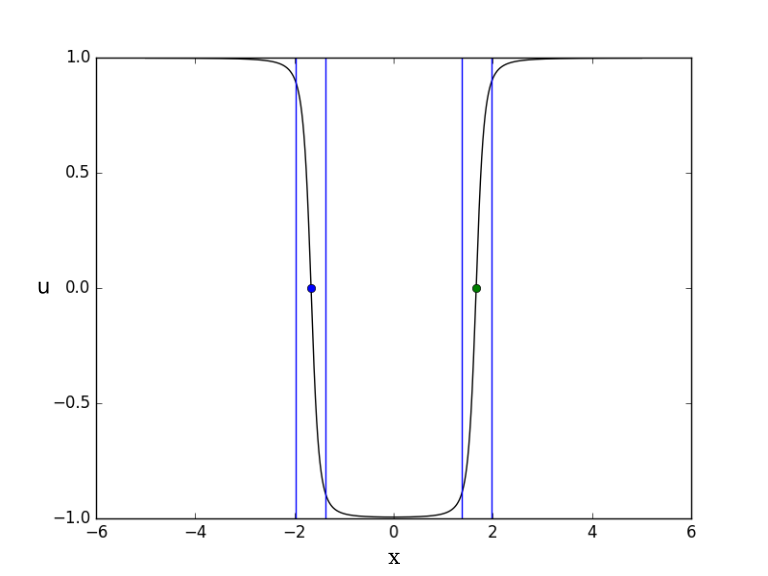}
  \caption{Example for the detected interfaces and roots. }
\label{fig:ex_interfaces}
\end{figure}

In order to determine rates, we used MATLABs \textit{Curve Fitting Toolbox} for polynomial and power-law fits.

\subsection{Model problem}
For our computations we fixed the following parameters.
We considered the domain $\Omega:=(-L,L)$ with $L:=10$.
In order to derive a starting condition which is close to the theoretical
considerations of Section~\ref{sec:analysis}, we approximately computed
a layer solution $v$, as described in Section~\ref{subsec:layer_solution_on_R}.
Tacitly extending the function $v$ by a constant outside of $(-L,L)$,
the initial condition is then given by
\begin{align*}
  u_0(x):=v\Big(\frac{x-x_0}{\eps}\Big) + v\Big(\frac{-x-x_0}{\eps}\Big) +1.
\end{align*}
This function possesses two interfaces at $\pm x_0$, which was taken to be $x_0:=L/4$.
We will later on also discuss what happens for more general initial conditions.
We used a timestepping size of $\Delta t:=\min(0.05,10^{-4}t_{col})$, ensuring that
we make at least $10^4$ steps.

For the spectral method we used a grid with $N=262144$ points. 
For the $hp$-method, in $\Omega$, we used a uniform grid of mesh size $\min(2L/\eps,8012)$ and polynomial degree $p_x=5$ in order to resolve the appearing steep flanks of the interfaces well. To discretize the artificial
variable, we used $p_y=8$ on a geometric grid with $8$ layers and a mesh grading factor of $\sigma=0.125$,
i.e., the grid consists of points of the form $8 (0.125)^{\ell}$ for $\ell=0, \dots 8$.

\subsection{Comparison of the two methods}
\label{subsec:comparision_methods}
In this section, we compare the different methods we used. 
Since they are quite different (first order vs second order in time, different approaches to the fractional Laplacian), we expect that this 
comparison gives a good indication of the real accuracy of our simulations. 
Instead of comparing the discrete solutions directly, we compare the postprocessed quantities: interface speed, interface width and time--to--collapse. 
The results are collected in Figure~\ref{fig:comparison_of_methods}.
As we can see, the error is reasonably small for all contributions, giving confidence that the behavior observed numerically matches the one of the exact solution. The only exception is for small values of $\alpha \approx 0.2$.  As we do not expect our discretization to be robust as $\alpha\to 0$ this has to be expected. At the same time, this behavior may explain the observed mismatch between our predicted behavior and the observations in this regime.

\begin{figure}[h]
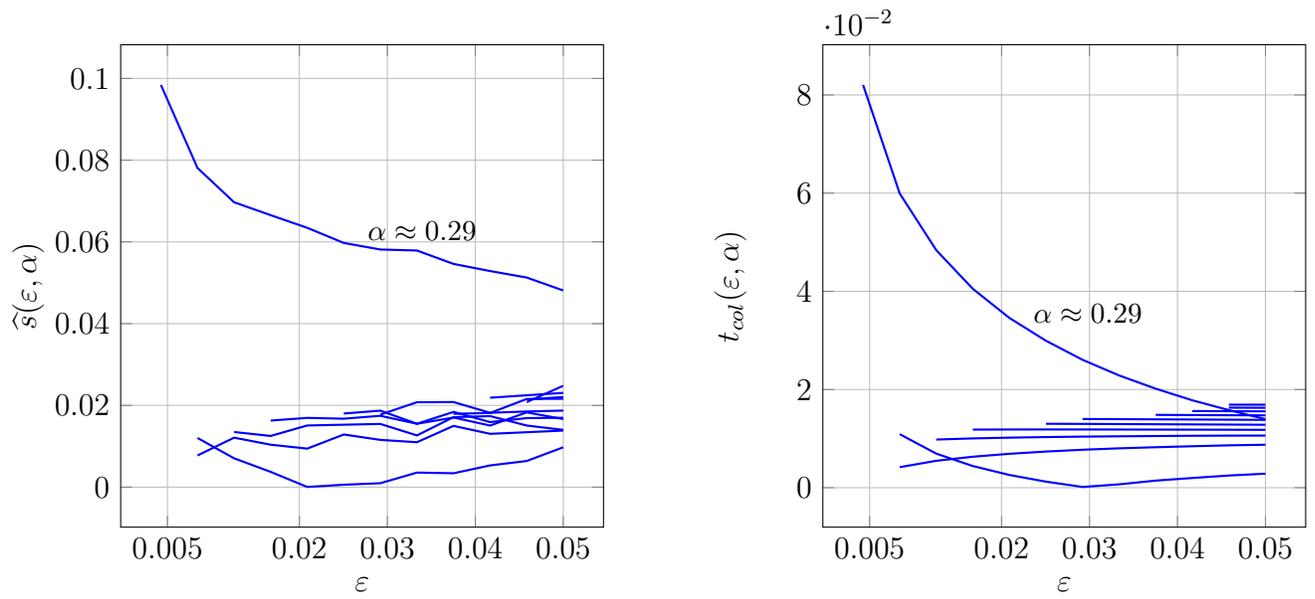
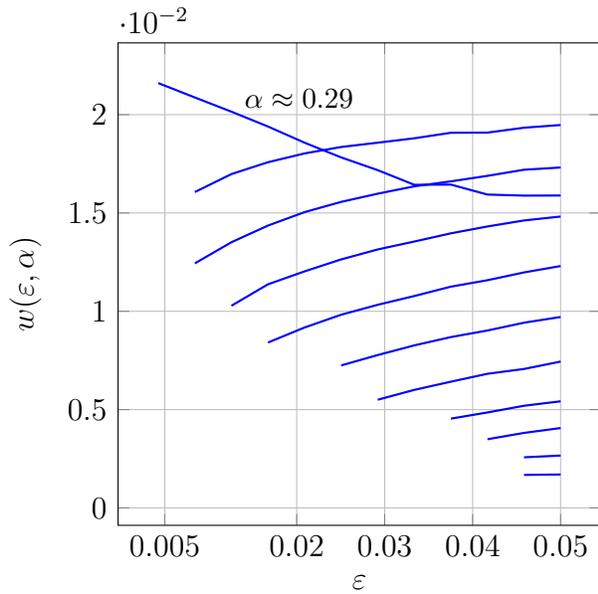

  \begin{subfigure}[b]{0.5\textwidth}
    \includeTikzOrEps{figures/comparison_hp_bdf2_l10_new_initial_condition_spectral_l10_new_initial_condition_s_speed}
     \caption{Relative difference of the renormalized interface speed between the methods.}
  \end{subfigure}\qquad 
  \begin{subfigure}[b]{0.5\textwidth}
    \includeTikzOrEps{figures/comparison_hp_bdf2_l10_new_initial_condition_spectral_l10_new_initial_condition_s_time} 
    \caption{Relative difference of the time--to--collapse between the methods.}
  \end{subfigure} \\\vspace{2mm}
  \begin{subfigure}[b]{0.4\textwidth}
    \includeTikzOrEps{figures/comparison_hp_bdf2_l10_new_initial_condition_spectral_l10_new_initial_condition_s_width}
    \caption{Relative difference of the interface width between the methods.}
  \end{subfigure}
  \caption{Comparison of two numerical methods: \textit{hp}--method vs. spectral--method.
    Different lines represent different values of $\alpha \in (0.2,1.9)$. }
  \label{fig:comparison_of_methods}
\end{figure}

\section{Results on Metastability}
\label{sec:results}
In this section, we consider the evolution of two interfaces.
We present the results of our numerical experiments on intervals $[-L,L]$ and compare them with our analysis on the real line. 
Therefore, it is natural to investigate the dependence of our quantities of interest on the size of the considered interval $[-L,L]$. 
As can be seen in Figure~\ref{fig:dependence_on_domain}, neither the (modified) speed nor the width of the interface seem to depend strongly on the size of the domain. 
Especially for larger domains, the results become indistinguishable.

\begin{figure}
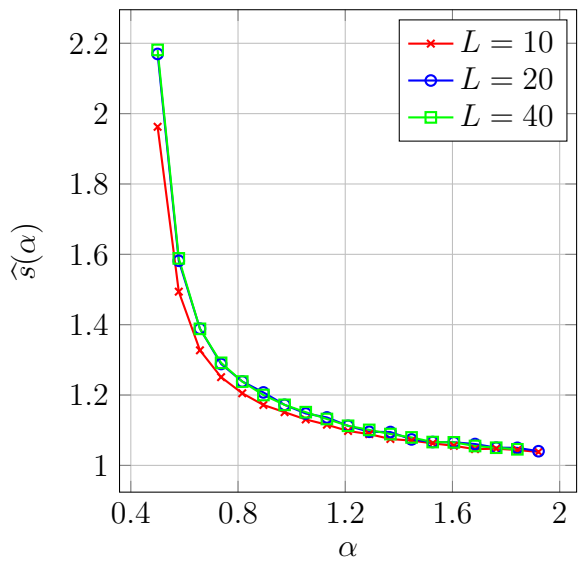
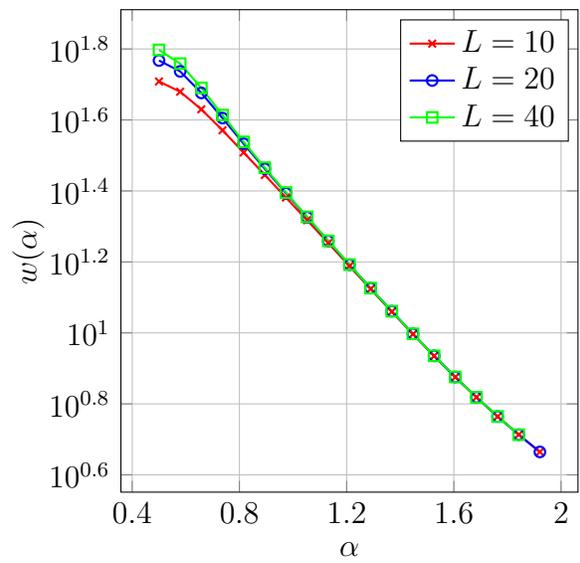

  \centering
  \begin{subfigure}[b]{0.45\textwidth}
    \includeTikzOrEps{figures/old/domain_comparison_mod_speed}
    \caption{Modified interface speed $\widehat{s}$.}
    \label{fig:comp_speed_on_intervals}
  \end{subfigure}
  \begin{subfigure}[b]{0.45\textwidth}
    \includeTikzOrEps{figures/old/domain_comparison_width}
    \caption{Interface width $w$.}
    \label{fig:comp_width_on_intervals}
  \end{subfigure}
  \caption{Comparing results on intervals~$[-L,L]$ with $L=10,20,40$ and $\eps=0.1$.}
  \label{fig:dependence_on_domain}
\end{figure}

We illustrate the metastable behavior for solutions of the Allen-Cahn equation~\eqref{eq:AC} with well--prepared initial data~\eqref{eq:TL:simple} with $L=10$, $\eps=0.01$ and $\alpha=0.9$ in Figure~\ref{fig:metastability}. In this simulation, we can define a time--to--collision $\widetilde{T}_C$ as the first time that two interfaces collide, i.e. $\widetilde{T}_C$ is the time at which the solution becomes non-negative.
This happens at some time between $t=3671.53$ and $t=3694.24$, see Figure~\ref{fig:metastability}.
We studied the evolution of interfaces also via the ODE system~\eqref{ODE:centers} which governs the evolution of the centers of interfaces. 
There the time--to--collision $T_C$ is defined as the first time that two centers collide, see~\eqref{cond:TC}. 
Using the parameters of our simulation, $T_C$ as given in~\eqref{eq:TC} is approximately $3587.10$, where we used $d_0=2$ as in the simulation and computed $\gamma=1.28550$ as in Appendix~\ref{subsec:layer_solution_on_R}. Comparing $\widetilde{T}_C$ with $T_C$ shows that the collision of interfaces takes place at a little later than expected.

\begin{figure}[h]
  \begin{subfigure}[b]{0.5\textwidth}
    \includegraphics[width=\textwidth]{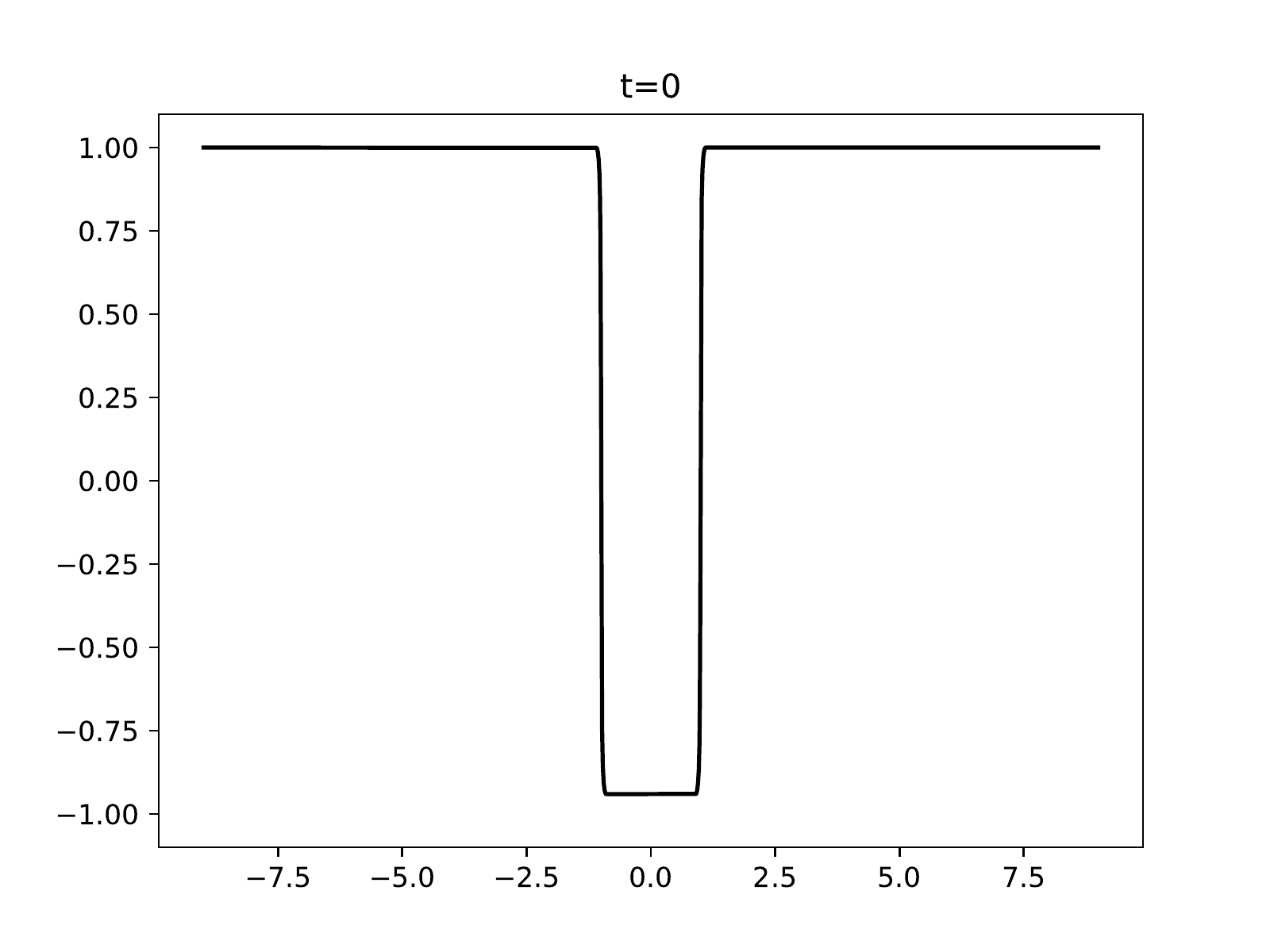}
    \caption{solution at time $t=0$.}
  \end{subfigure}\qquad
  \begin{subfigure}[b]{0.5\textwidth}
    \includegraphics[width=\textwidth]{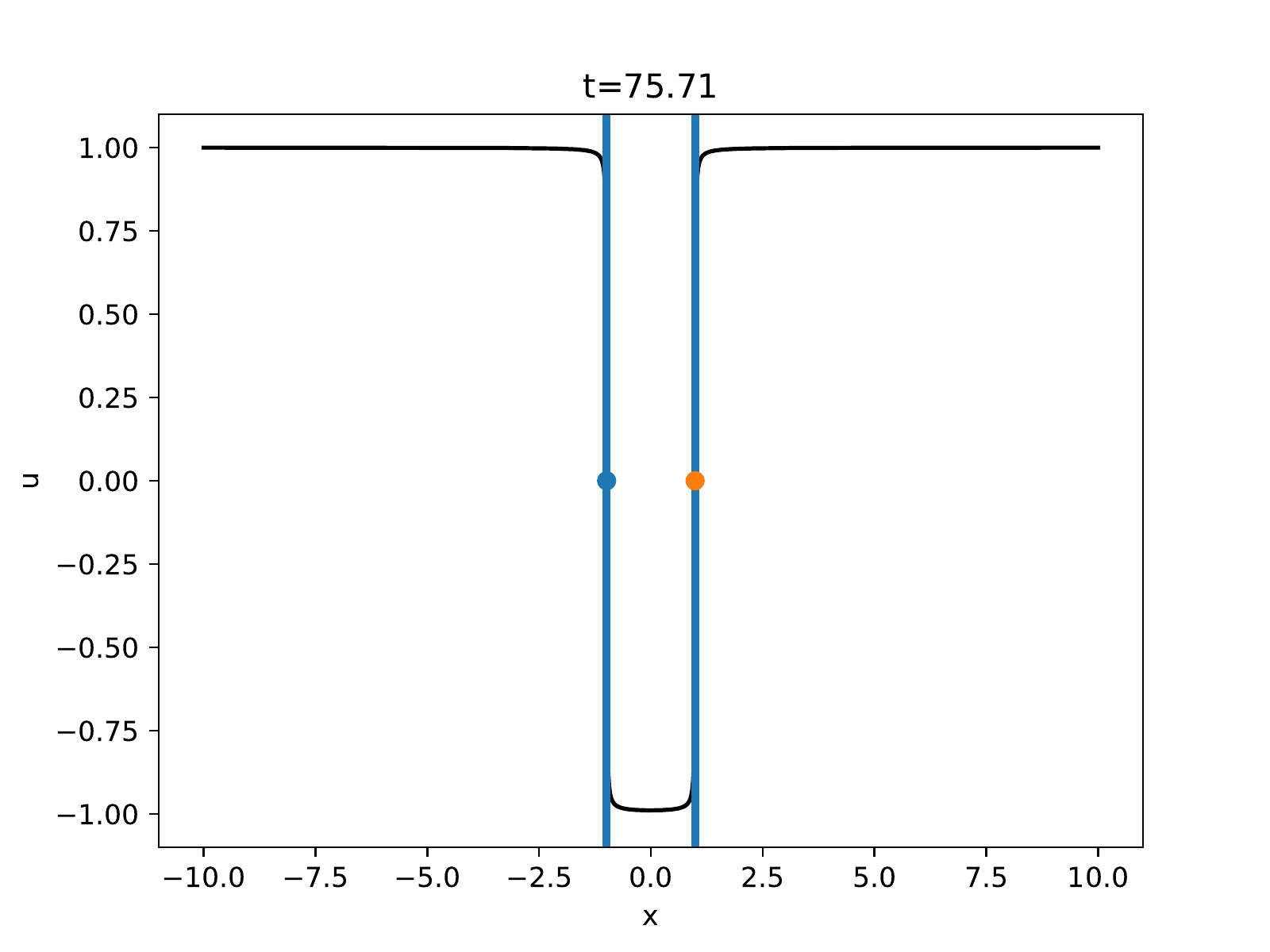}
    \caption{solution at time $t=75.71$.}
  \end{subfigure}\qquad 
  \begin{subfigure}[b]{0.5\textwidth}
    \includegraphics[width=\textwidth]{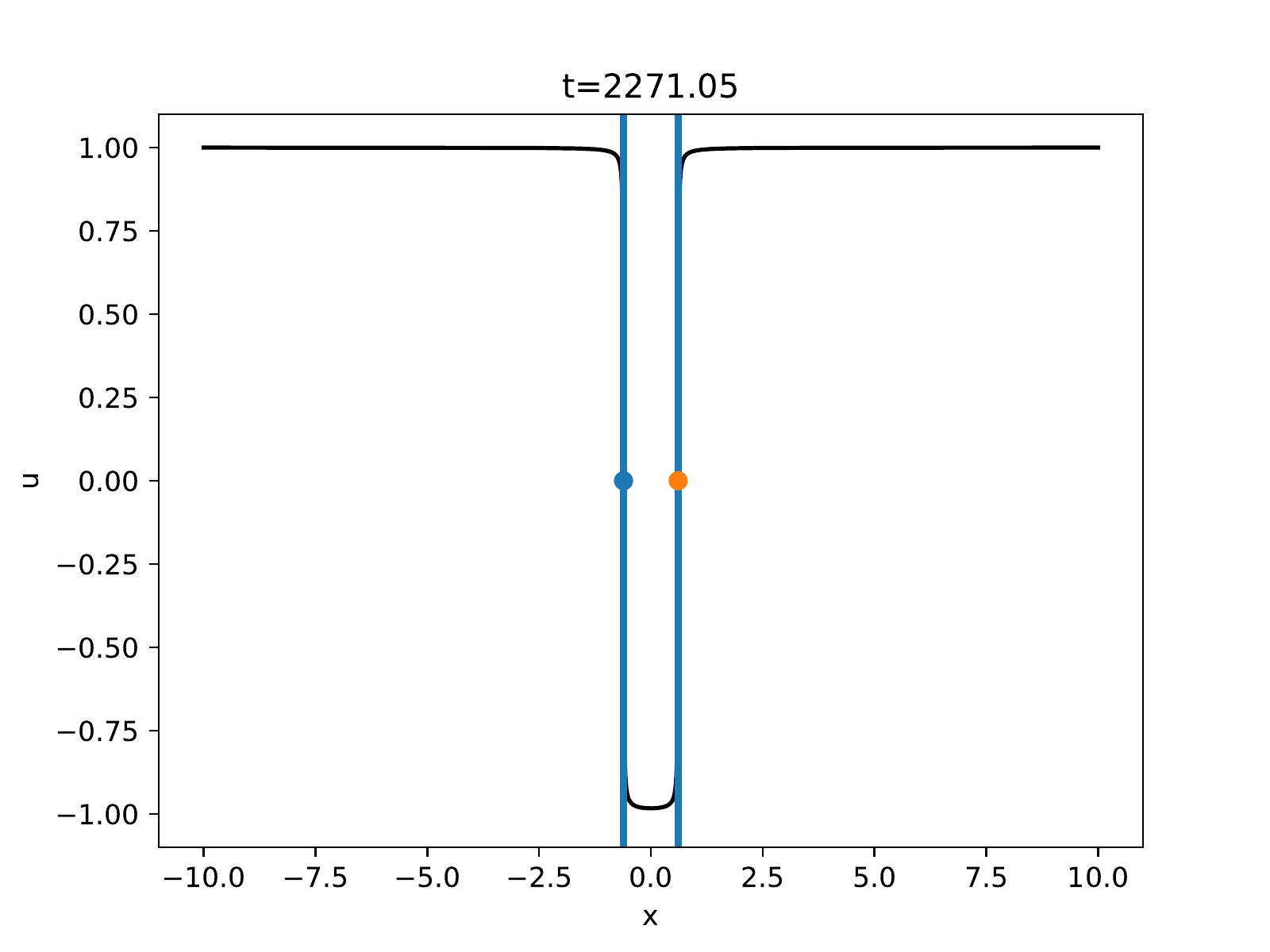} 
    \caption{solution at time $t=2271.05$.}
  \end{subfigure} 
  \begin{subfigure}[b]{0.5\textwidth}
    \includegraphics[width=\textwidth]{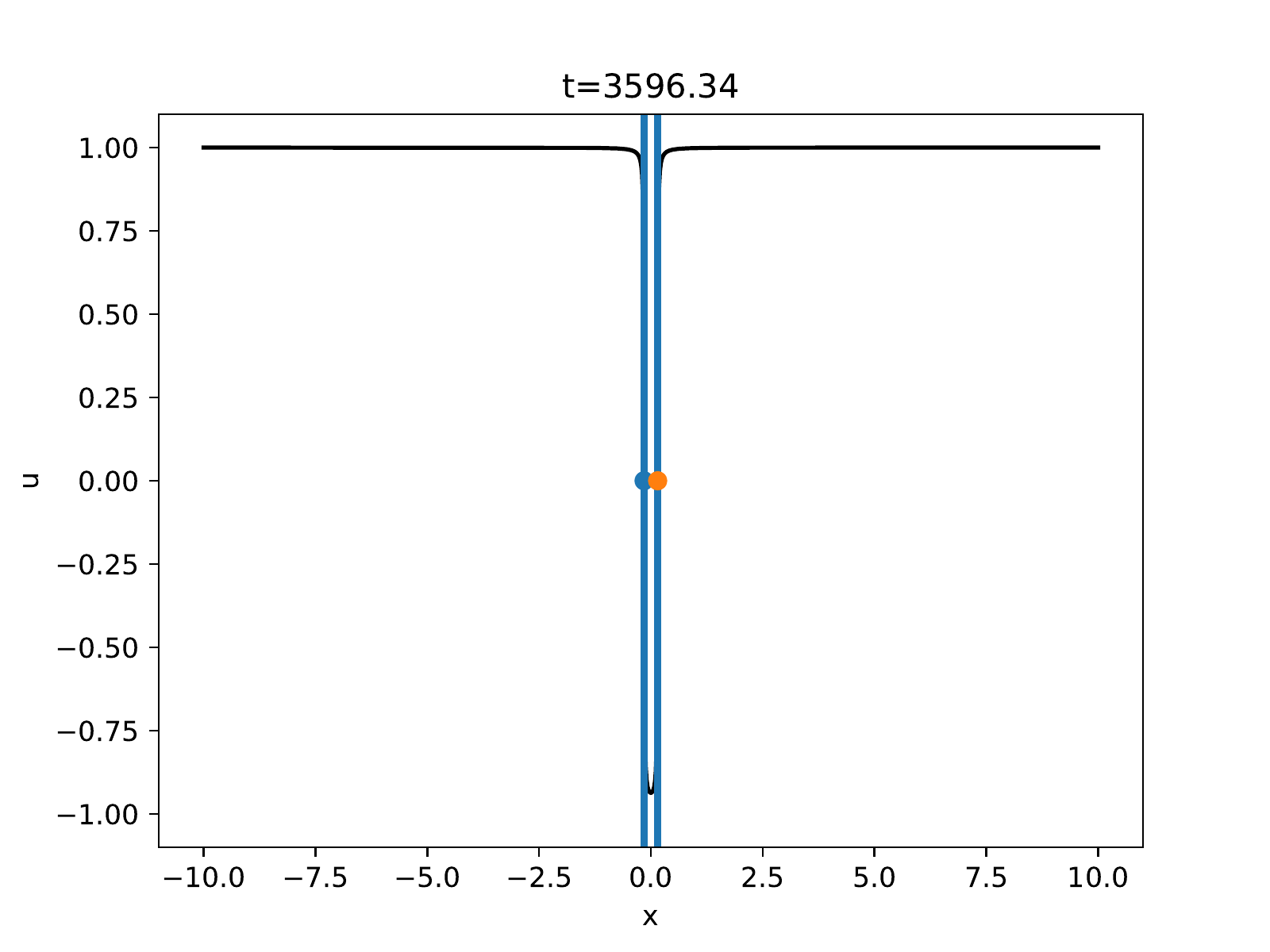}
    \caption{solution at time $t=3596.34$.}
  \end{subfigure}\\\vspace{2mm}
  \begin{subfigure}[b]{0.5\textwidth}
    \includegraphics[width=\textwidth]{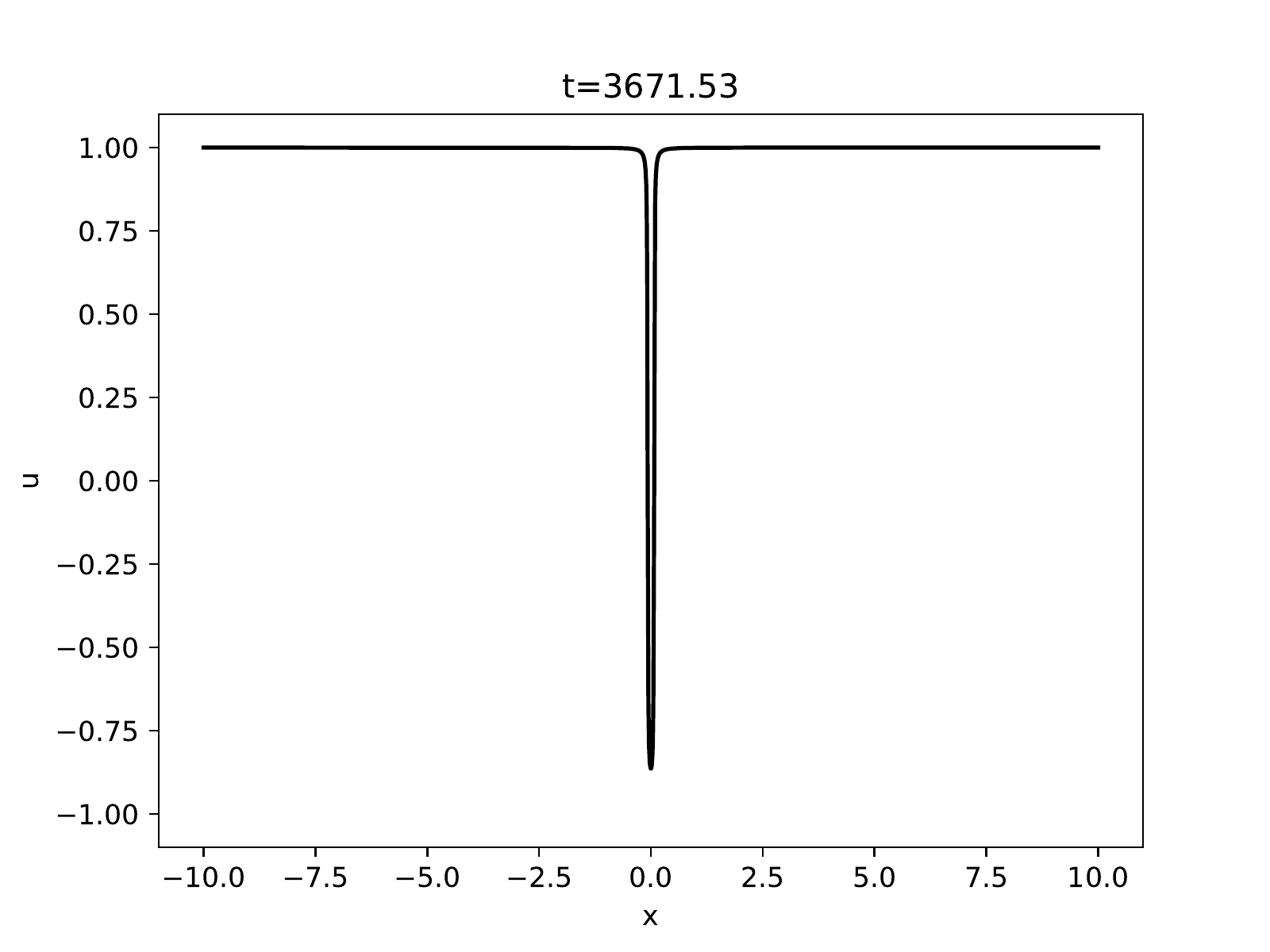}
    \caption{solution at time $t=3671.53$.}
  \end{subfigure}
  \begin{subfigure}[b]{0.5\textwidth}
    \includegraphics[width=\textwidth]{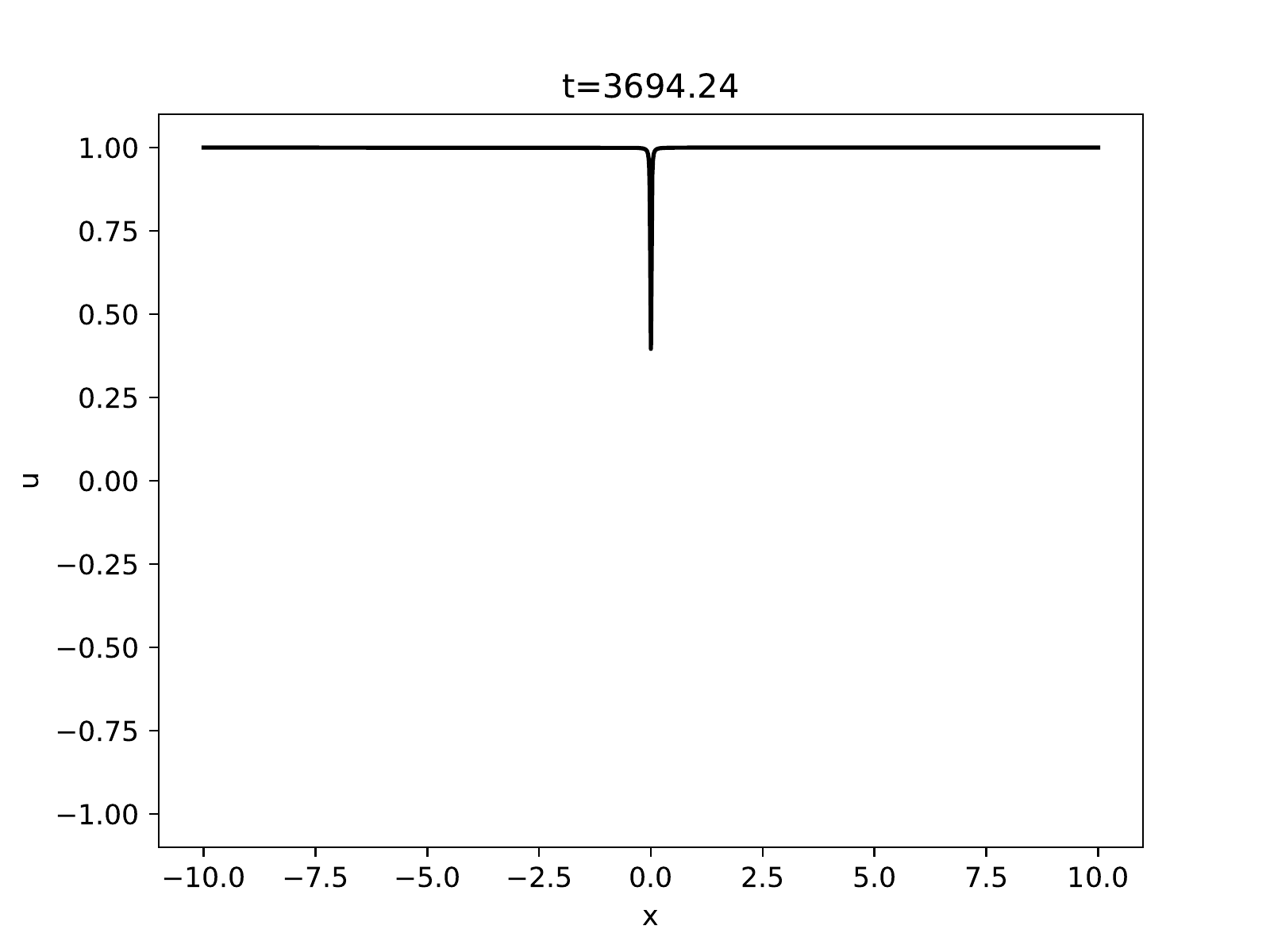}
    \caption{solution at time $t=3694.24$.}
  \end{subfigure}
  \caption{Evolution of solution for Allen-Cahn equation~\eqref{eq:AC} with well--prepared initial data~\eqref{eq:TL:simple} with $L=10$, $\eps=0.01$ and $\alpha=0.9$.}
  \label{fig:metastability}
\end{figure}

Next, we will first discuss the interface speed $s=s(\eps,\alpha)$ and, then, the interface width~$w=w(\eps,\alpha)$.

\subsection{Interface speed~$s=s(\eps,\alpha)$}
\label{sec:interface_speed}

Due to the analysis of the evolution of interfaces of~\eqref{eq:AC} on the real line, the (centers of) interfaces move according to the ODE system~\eqref{ODE:centers}.
In the case of two interfaces with centers $x_1$ and $x_2$, the interface speed satisfies approximately
\begin{equation} \label{speed_R}
 \begin{split}
 s_{\R} (\eps,\alpha)
  &\approx \Big| \frac{\eps^{1+\alpha}}{\alpha} 4 \frac{2^{\alpha} \Gamma((1+\alpha)/2)}{\sqrt{\pi} |\Gamma(-\alpha/2)|} \gamma \frac{x_1 -x_2}{|x_1 -x_2|^{1+\alpha}} \Big| \\
  &= \eps^{1+\alpha}\ \frac{4}{\alpha} \frac{2^{\alpha} \Gamma((1+\alpha)/2)}{\sqrt{\pi} |\Gamma(-\alpha/2)|}\ \gamma\ \frac{1}{|x_1 -x_2|^{\alpha}} \,.
 \end{split} 
\end{equation}
where $\gamma := (\int_\R (v'(x))^2 \txtd x)^{-1}$ and $v$ is a basic layer solution of~\eqref{eq:BLS:1} satisfying $\lim_{x\pm \infty} v(x) =\pm 1$ and $v(0) =0$.

In order to cross-validate this analysis, we measured the speed numerically. Renormalizing according to our previous considerations, we compute a renormalized speed
\begin{align*}
  \widehat{s}(\eps,\alpha)
  &:=\frac{ C_{\alpha} |x_1-x_2|^{\alpha}}{\eps^{1+\alpha} } s(\eps,\alpha).
\end{align*}
with the normalization factor 
\[
 C_\alpha
  :=\Bigg( \frac{4}{\alpha} \frac{2^{\alpha} \Gamma((1+\alpha)/2)}{\sqrt{\pi} |\Gamma(-\alpha/2)|}\Bigg)^{-1}
  =\frac{\Gamma(1-\alpha)}{2\sec(\alpha \frac{\pi}{2})} \,.
\]
If the relationship~\eqref{speed_R} holds true, we expect that the renormalized speed then behaves like the factor~$\gamma$.
In Figure~\ref{fig:normalized_speed}, we have plotted the relationship $\alpha \mapsto \widehat{s}(\eps,\alpha)$ for different values of~$\eps$. 
We first note, that all the curves (roughly) correspond to each other which hints that the
$\eps$--dependence is properly captured by the asymptotics. 
Comparing the resulting curve to the numerically computed value of~$\gamma$ then
confirms the overall relationship.

\begin{figure}
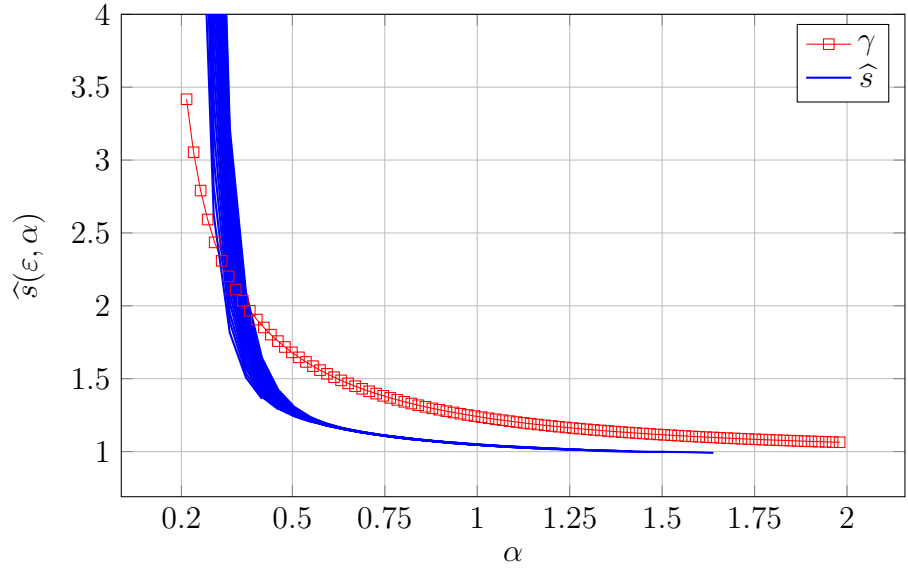

  \centering
  \includeTikzOrEps{figures/new_hp_bdf2_fixed_s_speed}
  \caption{Comparison of the renormalized speed~$\widehat{s}(\eps,\alpha)$ to $\gamma$.
  Blue: different lines represent different values of $\varepsilon \in (0.002,0.05)$.}
  \label{fig:normalized_speed}
\end{figure}

\subsection{Time--to--collapse $t_{col}$}
The previous computation of the interface speed still relied on locating the interface and measuring its speed using a number of sample points. This might cause some hard to account for inaccuracies, resulting in small discrepancies to the predicted behavior and the slightly noisy look of Figure~\ref{fig:normalized_speed}.
In order to arrive at a second, more robust result, we also looked at the
time it takes for the interfaces to collide and finally annihilate, what we call the \textit{time--to--collapse}~$t_{col}$ for solutions of PDE~\eqref{eq:AC}.
We compute it by comparing our numerical solution to the constant function
taking the value $1$ everywhere. 
If the difference, as measured in the $L^2$-norm, drops below $10^{-6}$ we record the current time and plot the analog to Figure~\ref{fig:normalized_speed}.

The time--to--collapse~$t_{col}$ for solutions of the Allen-Cahn equation~\eqref{eq:AC} with well--prepared initial data~\eqref{eq:TL:simple} is approximately/greater than the time--to--collision~$T_C$ in the associated ODE system~\eqref{ODE:centers} (up to some additive terms, see Theorems~\ref{thm:TC} and~\ref{thm:Teps}):
\begin{align}
  t_{col}(\eps,\alpha)
  \geq \bigg(\frac{d_0}{\eps}\bigg)^{1+\alpha} \frac{C_\alpha}{2 (1+\alpha) \gamma}.
\end{align}
Computing the modified time--to--collapse as 
\begin{equation*}
  \widehat{t}_{col}
  :=\bigg(\frac{\eps}{d_0}\bigg)^{1+\alpha} \frac{2 (1+\alpha)}{C_\alpha} t_{col} \,,
\end{equation*}
we expect that $\widehat{t}_{col} \geq \gamma^{-1}$ but to be of comparable size.
Figure~\ref{fig:normalized_time} confirms the estimate on~$\widehat{t}_{col}$.

\begin{figure}
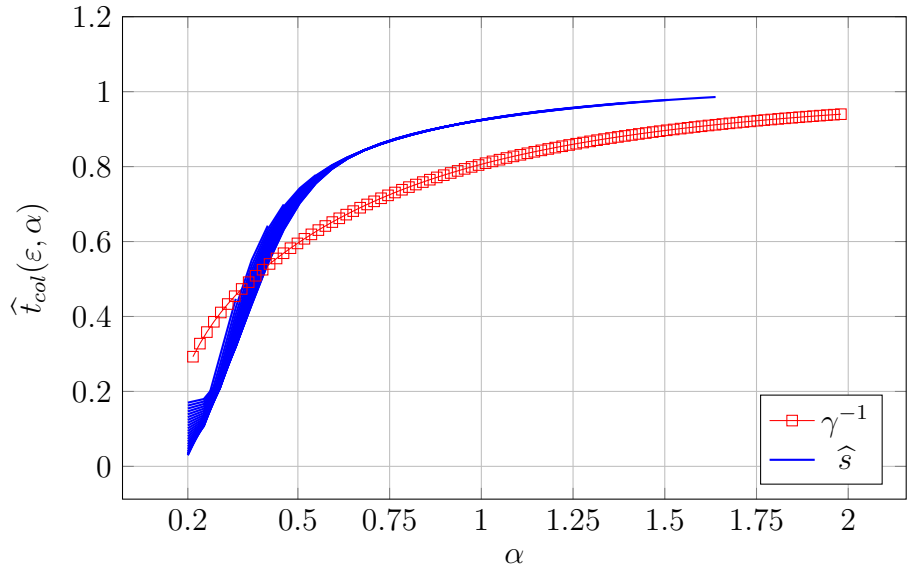

  \centering
  \includeTikzOrEps{figures/new_hp_bdf2_fixed_s_time}
  \caption{ Comparison of the renormalized time--to--collapse~$\widehat{t}_{col}$ to $1/\gamma$.
    Blue: different lines represent different values of $\varepsilon  \in (0.002,0.05)$.}
  \label{fig:normalized_time}
\end{figure}

\subsection{Interface width $w=w(\eps,\alpha)$}
Next, we consider the width of individual interfaces. 
For each fixed $\alpha$, we can plot the interface width as a function of $\eps$ in a log-log plot as shown in Figure~\ref{fig:interface_width}. 

\begin{figure}
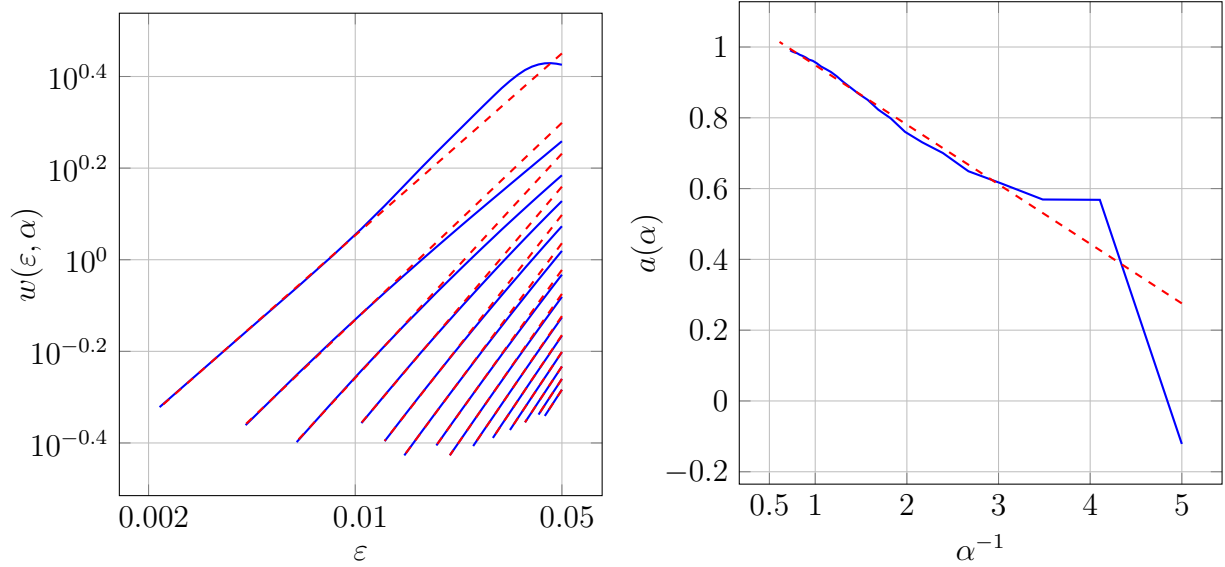

  \center
  \begin{subfigure}[b]{0.46\textwidth}
    \includeTikzOrEps{figures/hp_bdf2_l10_new_initial_condition_width}    
  \end{subfigure}\quad
  \begin{subfigure}[b]{0.46\textwidth}
    \includeTikzOrEps{figures/hp_bdf2_l10_new_initial_condition_width_fit}
  \end{subfigure}
  \caption{Behavior of the interface width (hp-BDF2 method). Different lines represent different values of $\alpha$.
  Blue line: actual data; black dashed line: power law fit $w(\eps,\alpha)\approx \eps^{a(\alpha)} b(\alpha)$.}
  \label{fig:interface_width}
\end{figure}

We see that the behavior with respect to $\eps$ can be approximated well by a power law of the form $w(\eps,\alpha) \approx \eps^{a(\alpha)} b(\alpha)$. In the second subplot of Figure~\ref{fig:interface_width}, we plotted the behavior of the coefficient $a(\alpha)$ when varying $\alpha$ in $(0.2,2)$. Namely, we chose a uniform grid of 40 points between $0.2$ and $1.9$. We see that the behavior especially in the regime of larger $\alpha$, can be well modeled by $a(\alpha) \approx \kappa_1 \alpha^{-1}+\kappa_2$. 
Empirically we determined the constants in the fit as $\kappa_1:=-0.168298$ and $\kappa_2:= 1.11709$.
Overall we see the empirical law 
\begin{align*}
  w(\eps,\alpha)\approx b(\alpha) \eps^{-0.168298 \alpha^{-1} +  1.11709},
\end{align*}
which is included in Figure~\ref{fig:interface_width} as the dotted black line. In order to get an estimate on $b(\alpha)$, we again rescale $w$ to get
\begin{align}
  \label{eq:definition_what}
 \widehat{w}(\eps,\alpha):=w(\eps,\alpha) \eps^{0.168298 \alpha^{-1} - 1.11709} .
\end{align}
We plot the behavior of $\widehat{w}$ in Figure~\ref{fig:modified_width}.
We observe that, while the $\eps$-dependence is well captured for larger values
of $\alpha$, the behavior for small $\alpha$ is more erratic.

\begin{figure}
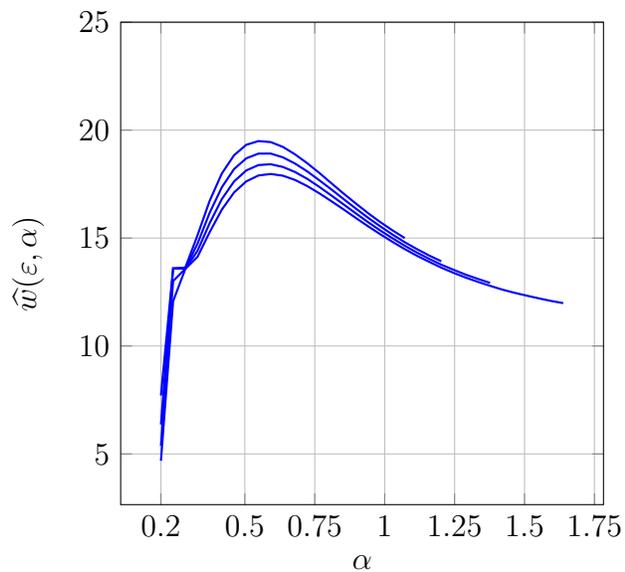
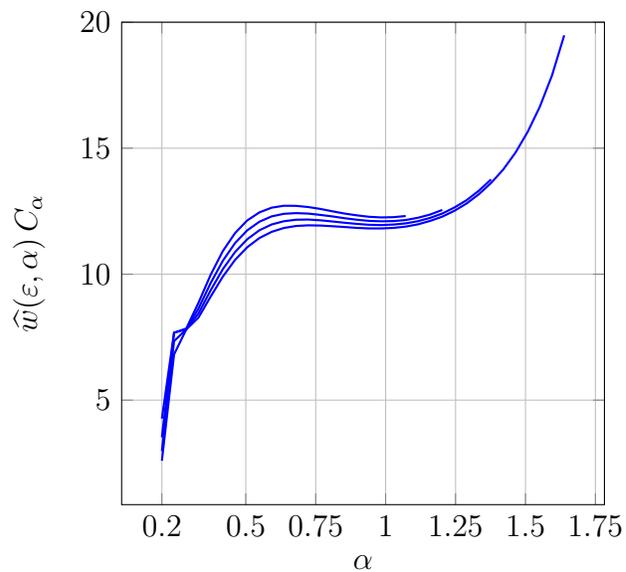
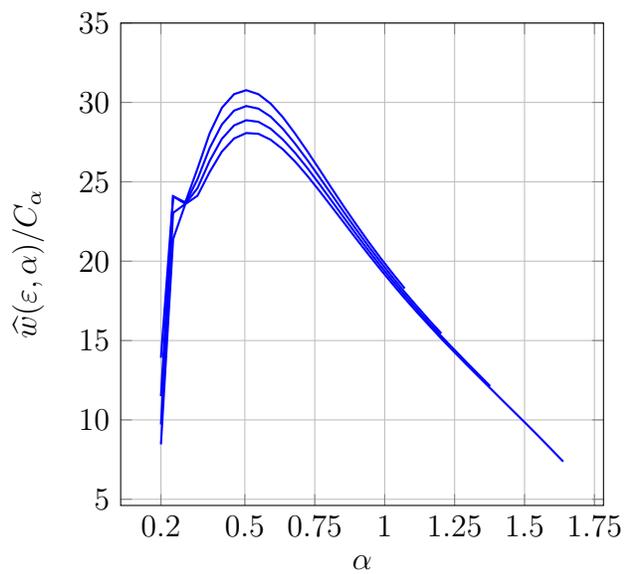

  \begin{subfigure}{0.5\textwidth}
    \includeTikzOrEps{figures/new_hp_bdf2_fixed_s_width_nocorrection}
    \caption{Behavior of $\widehat{w}(\eps,\alpha)$ in~\eqref{eq:definition_what}.}
    \label{fig:modified_width_a}
  \end{subfigure}
  \begin{subfigure}{0.5\textwidth}
    \includeTikzOrEps{figures/new_hp_bdf2_fixed_s_width}
    \caption{Same  as in~(a), but also correcting using $C_{\alpha}$.}
  \end{subfigure}
  \begin{subfigure}{0.5\textwidth}
    \includeTikzOrEps{figures/new_hp_bdf2_fixed_s_width_alternatecorrection}
    \caption{Same  as in~(a), but also correcting using $C_{\alpha}$ (alternative version).}
  \end{subfigure}

  \caption{Behavior of the modified interface width $\widehat{w}(\varepsilon,\alpha)$ with respect to $\alpha$.
    Different lines represent different values of $\varepsilon$.}
\label{fig:modified_width}
\end{figure}

\section{Summary and Outlook}
\label{sec:outlook}

Following the analysis of a fractional Allen-Cahn equation on~$\R$, we obtained formulas for the interface speed and time--to--collision and its dependence on $\eps$ and $\alpha$. In Section~\ref{sec:results} we checked the validity of these formulas for our model on a bounded interval with homogeneous Neumann boundary conditions. Moreover, we studied numerically the behavior of the interface width well before the two interfaces collide. Following the analysis by Patrizi and Valdinoci, we considered \textit{well--prepared initial data}, i.e., we started close to a metastable profile. Hence, we are now in the position to provide a good dynamical unterstanding of the fractional Allen-Cahn equation in its metastable regime. Yet, for the classical Allen-Cahn equation, it is well-known that there are four possible stages in the generation, propagation and annihilation of metastable patterns~\cite{Chen:2004}: Starting from a large class of oscillating initial data, there is phase separation, then the formation of a metastable pattern, next a slow motion of the metastable pattern and (a possible) annihilation of two interfaces. The latter stages may repeat until either a constant stable state or a single basic layer solution remains. Thus, we are still missing a more detailed analysis of the early stages of the generation of metastable patterns for the fractional Allen-Cahn equation, which remains a topic for future work.

Another natural question is to try to extend the approach in~\cite{CarrPego} to the case of a fractional Allen-Cahn equation, i.e., to re-consider the auxiliary BVP~\eqref{eq:BVP} as a problem in its own right. We may ask what happens if we replace the second-order derivative by the fractional Laplacian? Is this problem still somehow connected to metastability? Is the solution unique if we require positivity in the interior? If so, what is its shape? If it has a peak near zero, what is the scaling of the spike layer as a function of $\ell$, $\eps$ and $\alpha$? These questions all remain to be dealt with in future work.

Another important direction is to consider other potentials $F$. In this work we considered a balanced potential, i.e. $F(-1)=F(1)$. To fully characterize the dynamics of the fractional Allen-Cahn equation for general unbalanced potentials is an open question, but see~\cite[\S 5]{NecNepGol:2008} and \cite{AchKue:2015, CabSir:2015} as examples for encouraging results towards this goal.

\appendix

\section{Dependence on the size of the domain}

Since all our analytical insights are dependent on working on the full space $\R$, whereas our numerics is restricted to a finite interval, it is natural to investigate the dependence of our quantities of interest on the size of the considered interval $(-L,L)$. 
As can be seen in Figure~\ref{fig:dependence_on_domain}, neither the (modified) speed nor the width of the interface seems to depend strongly on the size of the domain. Especially for larger domains, the results become indistinguishable.
%

\subsection{Basic layer solution on $\R$}
\label{subsec:layer_solution_on_R}
The layer solutions of~\eqref{eq:BLS:1}--\eqref{eq:BLS:2} are -- in fact -- the stationary traveling wave solutions of~\eqref{eq:AC}, which are unique up to translations (see~\cite{CabSir:2015}) and locally asymptotically stable, see~\cite{AchKue:2015, MaNiuWang:2019}.
Therefore we aim to approximate the layer solution as the stationary solution of~\eqref{eq:AC}.
We use again the numerical scheme on intervals $[-L,L]$ as presented in Section~\ref{sec:numerics}.
Starting our numerical scheme in Section~\ref{sec:numerics} with $\eps=1$ and an initial datum~$u_0$ given as the linear function connecting $-1$ and $1$, we let the simulation run until a stationary state is reached. Finally we compute 
\[ \int_{-\infty}^{\infty}{|v'|^2 \txtd x}
 = \int_{-\infty}^{-L}{|v'|^2 \txtd x} +\int_{-L}^{L}{|v'|^2 \txtd x} +\int_{L}^{\infty}{|v'|^2 \txtd x}\,.
\]
The term $\int_{-L}^{L}{|v'|^2 \txtd x}$ is computed from our numerical solution. 
Whereas for the first and third summand we use that the layer solution is an odd function (see \cite[Thm. 2.4]{CabSir:2015} and its tail behaves as 
\[
 v(x) \approx 1 +p x^{-\alpha} \quad\text{for } x\gg L \quad\text{where } p = -\frac{\sec(\alpha\pi/2)}{2\Gamma(1-\alpha)}\,,
\]
see \cite[Eq. (23)]{NecNepGol:2008} and \cite[Thm. 2.7]{CabSir:2015}.
Thus,
\begin{align*}
 \int_{-\infty}^{-L}{|v'|^2 \txtd x}
  &=\int_{L}^{\infty}{|v'|^2 \txtd x} \\
  &=\int_{L}^{\infty}{p^2 \alpha^2 x^{-2\alpha -2} \txtd x} \\
  &=p^2 \alpha^2 \frac{-1}{2\alpha +1} x^{-2\alpha -1} \Big|_{L}^{\infty} \\
  &=p^2 \alpha^2 \frac{1}{2\alpha +1} L^{-2\alpha -1} \,.
\end{align*}
Summing up, we deduce 
\[ \int_{-\infty}^{\infty}{|v'|^2 \txtd x}
 = \int_{-L}^{L}{|v'|^2 \txtd x} +2 p^2 \alpha^2 \frac{1}{2\alpha +1} L^{-2\alpha -1}\,.
\]
We again chose different values of $L$ in order to investigate the influence of the choice of bounded domain, but observe that for larger intervals, the influence of the cutoff becomes negligible.

\begin{figure}
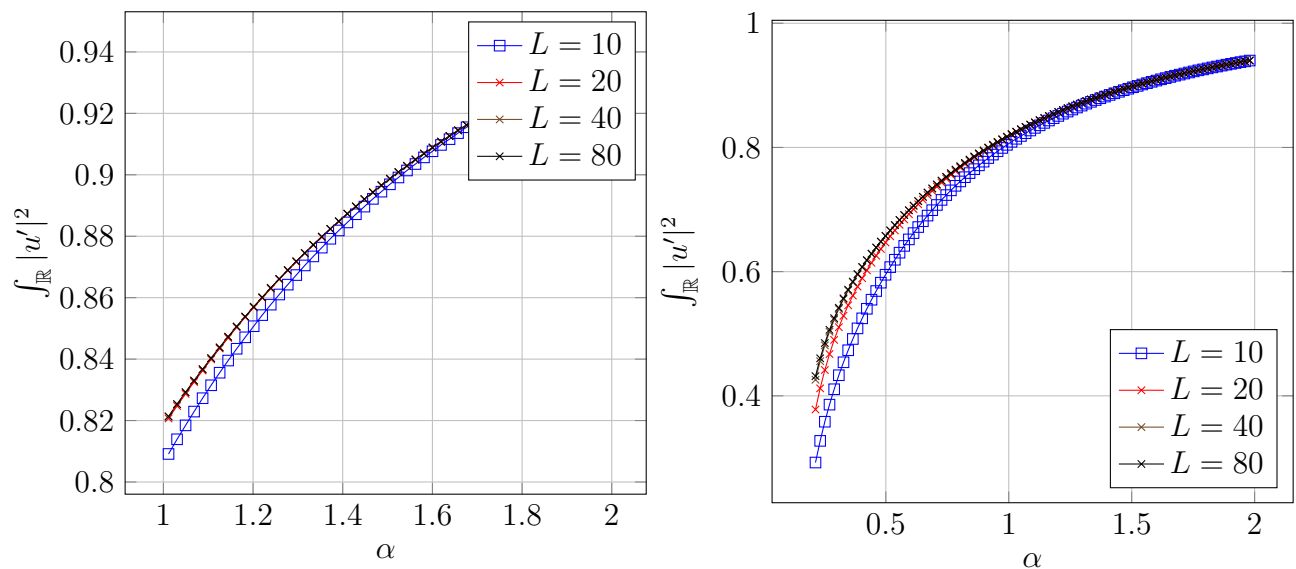

  \begin{subfigure}{0.5\linewidth}    
    \includeTikzOrEps{figures/layer_solution_norms}
  \end{subfigure}
  \begin{subfigure}{0.5\linewidth}
    \includeTikzOrEps{figures/layer_solution_norms_full}
  \end{subfigure}
  \caption{$H^1$-seminorm $\int_{-\infty}^{\infty}{|v'|^2 \txtd x}$ of a basic layer solution~$v$ of~\eqref{eq:BLS:1}--\eqref{eq:BLS:2}.}
\end{figure}

\textbf{Acknowledgments:} The authors (FA, JMM, AR) gratefully acknowledge financial support by the Austrian Science Fund (FWF) through
the research program ``Taming complexity in partial differential systems'' (grant SFB F65).
AR has in addition been funded through project P29197-N32. CK gratefully 
acknowledges support via a Lichtenberg Professorship of the 
VolkswagenStiftung as well as partial support of the SFB/TR 109 
Discretization in Geometry and Dynamics funded by the German Science 
Foundation (DFG).

\bibliographystyle{plain}
\bibliography{references}






\end{document}